\newcommand{\Ordo}[1]{\mathcal{O} \left( #1 \right)}
\newcommand{\natop}[2]{\genfrac{}{}{0pt}{1}{#1}{#2}}
\newcommand{\lognorm}[1]{\mu_{2} \left[ #1 \right]}
\newcommand{\Realdom}{\mathbf{R}}
\newcommand{\Intdom}{\mathbf{Z}}
\newcommand{\stoich}{\mathbb{N}}
\newcommand{\hatw}{\hat{w}}
\newcommand{\hatF}{\hat{F}}
\newcommand{\Vol}{V}
\newcommand{\stopping}{P}
\newcommand{\Master}{\mathbb{M}^{T}}
\newcommand{\Masteradj}{\mathbb{M}}
\newcommand{\Probspace}{\Omega}
\newcommand{\Probelem}{\omega}
\newcommand{\Probfiltr}{\mathcal{F}}
\newcommand{\Prob}{\mathbf{P}}
\newcommand{\markspace}{I}
\newcommand{\loc}{\mathrm{loc}}
\newcommand{\Sspace}[1]{S_{\Probfiltr}^{#1,\loc}(\Intdom_{+}^{D})}
\newcommand{\onevec}{\boldsymbol{1}}
\newcommand{\onevect}{\boldsymbol{1}^{T}}
\newcommand{\lvec}{\boldsymbol{l}}
\newcommand{\lvect}{\boldsymbol{l}^{T}}
\newcommand{\lnorm}[1]{\left\| #1 \right\|_{\lvec}}
\numberwithin{equation}{section}
\numberwithin{table}{section}
\numberwithin{figure}{section}
\theoremstyle{plain}
\newtheorem{theorem}{Theorem}[section]
\newtheorem{lemma}[theorem]{Lemma}
\newtheorem{proposition}[theorem]{Proposition}
\newtheorem{corollary}[theorem]{Corollary}
\theoremstyle{definition}
\newtheorem{definition}{Definition}[section]
\newtheorem{assumption}[definition]{Assumption}
\newtheorem{example}[definition]{Example}
\begin{document}

\title[Stability of stochastic jump kinetics]
      {On the stability of stochastic jump kinetics}

\author[S. Engblom]{Stefan Engblom}
\address{Division of Scientific Computing \\
  Department of Information Technology \\
  Uppsala University \\
  SE-751 05 Uppsala, Sweden.}

\email{\href{mailto:stefane@it.uu.se}{stefane@it.uu.se}}
\thanks{Corresponding author: S. Engblom, telephone +46-18-471 27 54,
  fax +46-18-51 19 25.}

\subjclass[2010]{Primary: 60J27,92C42; Secondary: 60J28,92C45}

%

\keywords{nonlinear stability, perturbation, continuous-time Markov
  chain, jump process, uncertainty, rate equation}

\date{October 17, 2014}

\selectlanguage{english}

\begin{abstract}
  Motivated by the lack of a suitable constructive framework for
  analyzing popular stochastic models of Systems Biology, we devise
  conditions for existence and uniqueness of solutions to certain jump
  stochastic differential equations (SDEs). Working from simple
  examples we find \emph{reasonable} and \emph{explicit} assumptions
  on the driving coefficients for the SDE representation to make
  sense. By `reasonable' we mean that stronger assumptions generally
  do not hold for systems of practical interest. In particular, we
  argue against the traditional use of global Lipschitz conditions and
  certain common growth restrictions. By `explicit', finally, we like
  to highlight the fact that the various constants occurring among our
  assumptions \emph{all can be determined once the model is fixed}.

  We show how basic long time estimates and some limit results for
  perturbations can be derived in this setting such that these can be
  contrasted with the corresponding estimates from deterministic
  dynamics. The main complication is that the natural path-wise
  representation is generated by a counting measure with an intensity
  that depends nonlinearly on the state.
\end{abstract}

\maketitle


\section{Introduction}

The observation that detailed modeling of biochemical processes inside
living cells is a close to hopeless task is a strong argument in favor
of stochastic models. Such models are often thought to be more
accurate than conventional rate-diffusion laws, yet remain more
manageable than, say, descriptions formed at the level of individual
molecules. Indeed, several studies \cite{fluctuation_limits,
  stochgeneexpression, Arkin} have showed that noisy models have the
ability to capture relevant phenomena and to explain actual, observed
dynamics.

In this work we shall consider some `flow' properties of a stochastic
dynamical system in the form of a quite general continuous-time Markov
chain. Since the pioneering work of Gillespie \cite{gillespie,
  gillespieCME}, in the Systems Biology context this type of model is
traditionally described in terms of a (chemical) \emph{master
  equation} (CME). This is the forward Kolmogorov equation of a
certain jump stochastic differential equation (jump SDE for brevity),
driven by independent point processes with state-dependent
intensities. Despite the popularity of the master equation approach,
little analysis on a per trajectory-basis of actual models has been
attempted.

In the general literature, when discussing existence/uniqueness and
various types of perturbation results, different choices of
assumptions with different trade-offs have been made. One finds that
the treatment often falls into one of two categories taking either a
``mathematical'' or a ``physical'' viewpoint. Either the conditions
are highly general but with subsequently less transparent proofs and
resulting in more abstract bounds. Or the conditions are formed out of
convenience, say, involving global Lipschitz constants, and classical
arguments carry through with only minor modifications.

Protter \cite[Chap.~V]{protterSDE} offers a nice discussion from the
mathematical point of view and in ascending order of generality,
including the arguably highly unrestrictive assumption of locally
Lipschitz continuous coefficients. Other authors
\cite[Chap.~6]{LevySDEs}, \cite[Chap.~3--5]{jumpSDEs} also treat the
evolution of general jump-diffusion SDEs in \emph{continuous} state
spaces.

A study of the flow properties of jump SDEs is found in
\cite{jumpSDE_wellposed}, where the setting is scalar and the state
continuous. In \cite{jumpSPDEnonLipschitz} jump stochastic
\emph{partial} differential equations are treated, and
existence/uniqueness results as well as ergodic results for the case
of a multiplicative noise, are found in
\cite{jumpSPDE_wellposed1,jumpSDEbound}. Numerical aspects in a
similar setting are discussed in \cite{jumpSPDE_wellposed2}.

In a more applied context, stability is often thought of as implied
from physical premises and the solution is tactically assumed to be
confined inside some bounded region \cite[Chap.~V]{VanKampen}. The
fundamental issue here is that for \emph{open} systems in a stochastic
setting, there is a non-zero probability of reaching \emph{any} finite
state and global assumptions must be formed with great care. The
analysis of open networks under an \textit{a priori} assumption of
boundedness is therefore quite difficult to interpret other than in a
qualitative sense. Notable examples in this setting include time
discretization strategies \cite{tau_higham, tau_li}, time-parallel
simulation techniques \cite{ssa_parareal}, and parameter perturbations
\cite{parameterCTMC}.

Evidently, essentially no systems of interest satisfy global Lipschitz
assumptions since the fundamental \emph{interaction} almost always
takes the form of a quadratic term. Interestingly, for \emph{ordinary}
differential equations, it has been shown \cite{computabilityODE} that
Lipschitz continuous coefficients imply a computationally
polynomial-space complete solution; thus providing a kind of
explanation for the convenience with this \emph{weak feedback}
assumption. It is also known \cite{explSDE}, that with SDEs,
superlinearly growing coefficients may in fact cause the forward Euler
method to diverge.

\subsection{Agenda}

Besides its expository material, the purpose of this paper is to
devise simple conditions that imply stability for finite and, in
certain cases, infinite times, and that, when applied to \emph{systems
  of practical interest}, yield explicit expressions for the
associated stability estimates. As a result the framework developed
herein applies in a constructive way to any chemical network, of
arbitrary size and topology, formed by any combination of the
elementary reactions \eqref{eq:el} to be presented in
Section~\ref{sec:stochkinetics}. Additionally, it will be clear how to
encompass also other types of nonlinear reactions that typically
result from adiabatic simplifications.

As an argument in favor of this bottom-up approach one can note that,
for evolutionary reasons, biochemical systems tend to operate close to
critical points in phase-space where the efficiency is the
highest. Clearly, for such dynamical systems, an analysis \emph{by
  analogy} might be highly misleading.

We also like to argue that our results are of interest from the
modeling point of view. Due to the type of phenomenological arguments
often involved, judging the relative effect of the (non-probabilistic)
\emph{epistemic uncertainty} is a fundamental issue which has so far
not rendered a consistent analysis.

\subsection{Outline}

The expository material in Section~\ref{sec:stochkinetics} is devoted
to formulating the type of processes we are interested in. We state
the master equation as well as the corresponding jump SDE and we also
look at some simple, yet informative actual examples. Since it is
expected that the properties of the stochastic dynamics are somehow
similar to those of the deterministic version, we search for a set of
minimal assumptions in the latter setting in
Section~\ref{sec:det}. Techniques for finding explicit values of the
constants occurring among our assumptions are also devised. The main
results of the paper are found in Section~\ref{sec:stoch} where we put
our theory together and prove existence and uniqueness, as well as
long time estimates and limit results for perturbations. A concluding
discussion is found in Section~\ref{sec:conclusions}.

%
%
%
%


\section{Stochastic jump kinetics}
\label{sec:stochkinetics}

In this section we start with the physicist's traditional viewpoint of
pure jump processes and write down the governing \emph{master
  equations}. These are evolution equations for the probability
densities of continuous-time Markov chains over a discrete
state space. Although the application considered here is mesoscopic
chemical kinetics, identical or very similar stochastic models are
also used in Epidemiology \cite{nlinepidemics}, Genetics
\cite{mathgenetics} and Sociodynamics \cite{extinction_rdme}, to name
just a few.

We then proceed with discussing a path-wise representation in terms of
a stochastic jump differential equation. The reason the sample path
representation is interesting is the possibility to reason about
\emph{flow} properties and thus compare functionals of single
trajectories. This is generally not possible with the master equation
approach.

For later use we conclude the section by looking at some prototypical
models. A simple analysis shows, somewhat surprisingly, that an
innocent-looking example produces second moments that grow
indefinitely.

\subsection{Reaction networks and the master equation}
\label{subsec:network}

We consider a chemical network consisting of $D$ different
\emph{chemical species} interacting according to $R$ prescribed
\emph{reaction pathways}. At any given time $t$, the \emph{state} of
the system is an integer vector $X(t) \in \Intdom_{+}^{D} = \{0,1,2,
\ldots \}^{D}$ counting the number of individual molecules of each
species. A reaction law is a prescribed change of state with an
intensity defined by a \emph{reaction propensity}, $w_{r}:
\Intdom_{+}^{D} \to \Realdom_{+}$. This is the transition probability
per unit of time for moving from the state $x$ to $x-\stoich_{r}$;
\begin{align}
  \label{eq:prop}
  \Prob\left[X(t+dt) = x-\stoich_{r}| \; X(t) = x\right] &=
  w_{r}(x) \, dt+o(dt).
\end{align}
where $\stoich_{r} \in \Intdom^{D}$ is the transition step and is the
$r$th column in the \emph{stoichiometric matrix} $\stoich \in
\Intdom^{D \times R}$. Informally, for states $x(t) \in
\Realdom_{+}^{D}$, we can picture \eqref{eq:prop} as a stochastic
version of the time-homogeneous ordinary differential equation
\begin{align}
  \label{eq:ODE}
  x'(t) &= -\sum_{r = 1}^{R} \stoich_{r} w_{r}(x) = 
  -\stoich w(x) =: F(x),
\end{align}
where $w(x) \equiv [w_{1}(x), \ldots, w_{R}(x)]^{T}$ is the column
vector of reaction propensities.

The physical premises leading to a description in the form of discrete
transition laws \eqref{eq:prop} often imply the existence of a
\emph{system size} $\Vol$ (e.g.~physical volume or total number of
individuals). For instance, in a given volume $\Vol$ the
\emph{elementary} chemical reactions can be written using the state
vector $x = [a,b]^{T}$,
\begin{align}
  \label{eq:el}
  \begin{array}{rl}
    \emptyset \xrightarrow{k_{1} \Vol} A, & \stoich_{1} = [-1,0]^{T}, \\
    A \xrightarrow{k_{2} a} \emptyset, & \stoich_{2} = [1,0]^{T}, \\
    A+A \xrightarrow{k_{3} a(a-1)/\Vol} \emptyset, & \stoich_{3} = [2,0]^{T}, \\
    A+B \xrightarrow{k_{4} ab/\Vol} \emptyset, & \stoich_{4} = [1,1]^{T},
  \end{array}
\end{align}
with the names of the species in capitals. These propensities are
generally scaled such that $w_{r}(x) = \Vol u_{r}(x/\Vol)$ for some
dimensionless function $u_{r}$. Intensities of this form are called
\emph{density dependent} and arise naturally in a number of situations
\cite[Chap.~11]{Markovappr}. For the rest of this paper, we
conveniently take $\Vol = 1$ and defer system's size analysis to
another occasion.


The models we consider here all have states in the positive integer
lattice and the assumption that no transition can yield a state
outside $\Intdom_{+}^{D}$ is therefore natural. We make this formal as
follows \cite[Chap.~8.2.2, Definition~2.4]{BremaudMC}:
\begin{assumption}[\textit{Conservation and stability}]
  \label{ass:prop0}
  For all propensities, $w_{r}(x) = 0$ for any $x \in \Intdom_{+}^{D}$
  such that $x-\stoich_{r} \not \in \Intdom_{+}^{D}$, and we also
  restrict initial data to $\Intdom_{+}^{D}$. Furthermore, $w_{r}:
  \Intdom_{+}^{D} \to \Realdom_{+}$ such that $w_{r}(x)$ is finite for
  all finite arguments $x$.
\end{assumption}

To state the chemical master equation (CME), let for brevity $p(x,t) =
\Prob(X(t) = x | \; X(0) = x_{0})$ be the probability that a certain
number $x$ of molecules is present at time $t$ conditioned upon an
initial state $x_{0}$. The CME is then given by
\cite[Chap.~V]{VanKampen}
\begin{align}
  \label{eq:Master}
  \frac{\partial p(x,t)}{\partial t} &= 
  \sum_{r = 1}^{R} w_{r}(x+\stoich_{r})p(x+\stoich_{r},t)-
  w_{r}(x)p(x,t) =: \Master p(x,t).
\end{align}
The convention of the transpose of the operator to the right of
\eqref{eq:Master} is the standard mathematical formulation of
\emph{Kolmogorov's forward differential system}
\cite[Chap.~8.3]{BremaudMC} in terms of which $\Masteradj$ is the
\emph{infinitesimal generator} of the associated Markov process. This
is also the adjoint of the \emph{master operator} $\Master$ in the
sense that $(\Master p,q) = (p,\Masteradj q)$ in the Euclidean inner
product over the state space. An explicit representation is
\begin{align}
  \label{eq:Masteradj}
  \Masteradj q(x) &= \sum_{r = 1}^{R} w_{r}(x) [q(x-\stoich_{r})-q(x)],
  \intertext{such that the propensities in \eqref{eq:prop} can be retrieved,}
  \Masteradj(x,x-\stoich_{r}) &= w_{r}(x).
\end{align}
Under assumptions to be prescribed in Section~\ref{subsec:ass} it
holds that the dynamics of the expected value of some time-independent
unknown function $f$, conditioned upon the initial state $x_{0}$, can
be written
\begin{align}
  \nonumber
  \frac{d}{dt} E^{x_{0}} [f(X_{t})] &=
  \sum_{x \in \Intdom_{+}^{D}} \frac{\partial p(x,t)}{\partial t} f(x) = 
  (\Master p,f) = \\
  \label{eq:dDynkin}
  &= (p,\Masteradj f) = \sum_{r = 1}^{R}
  E^{x_{0}} \left[ w_{r}(X_{t}) \left( f(X_{t}-\stoich_{r})-f(X_{t}) \right) \right].
\end{align}
We now consider a path-wise representation for the stochastic process
$X_{t}$.

\subsection{The sample path representation}

In the present context of analyzing models in stochastic chemical
kinetics, the path-wise jump SDE representation seems to have been
first put to use in \cite[\textit{manuscript}]{ME2SDE}, and it was
later further detailed in \cite{tau_li}. It should be noted, however,
that an equivalent representation was used much earlier by Kurtz (see
the monograph \cite{Markovappr}).

We thus assume the existence of a probability space
$(\Probspace,\Probfiltr,\Prob)$ with the filtration $\Probfiltr_{t \ge
  0}$ containing $R$-dimensional Poisson processes. The state of the
system $X(t) \in \Intdom_{+}^{D}$ will be constructed from a
stochastic integral with respect to suitably chosen Poisson random
measures.

The transition probability \eqref{eq:prop} defines a \emph{counting
  process} $\pi_{r}(t)$ counting at time $t$ the number of reactions
of type $r$ that has occurred since $t = 0$. It follows that these
processes fully determine the state $X(t)$,
\begin{align}
  \label{eq:Poissrepr}
  X_{t} &= X_{0}-\sum_{r = 1}^{R} \stoich_{r} \pi_{r}(t).
\end{align}
The counting processes are obtained from the transition intensities
(cf.~\eqref{eq:prop})
\begin{align}
  \Prob[\pi_{r}(t+dt)-\pi_{r}(t) = 1| \; \Probfiltr_{t}] &=
  w_{r}(X_{t-})\,dt+o(dt),
\end{align}
where by $X(t-)$ we mean the value of the process prior to any
transitions occurring at time $t$, and where the little-o notation is
understood uniformly with respect to the state
variable. Alternatively, using Kurtz's \emph{random time change
  representation} \cite[Chap.~6.2]{Markovappr}, we can produce the
counting process from a standard unit-rate Poisson process $\Pi_{r}$,
\begin{align}
  \label{eq:count}
  \pi_{r}(t) &= \Pi_{r} \left( \int_{0}^{t} w_{r}(X_{s-}) \, ds \right).
\end{align}

The \emph{marked} counting measure \cite[Chap.~VIII]{pointPQ}
$\mu_{r}(dt \times dz; \, \Probelem)$ with $\Probelem \in \Probspace$
defines an increasing sequence of arrival times $\tau_{i} \in
\Realdom_{+}$ with corresponding ``marks'' $z_{i} \in \markspace :=
[0,1]$ according to some probability distribution which we will take
to be uniform. The intensity $m_{r}(dt \times dz)$ of $\mu_{r}(dt
\times dz)$ is the Lebesgue measure scaled by the corresponding
propensity, $m_{r}(dt \times dz) = w_{r}(X_{t-}) \, dt \times
dz$. Using this formalism, \eqref{eq:Poissrepr} and \eqref{eq:count}
can be written in the jump SDE form
\begin{align}
  \label{eq:SDE0}
  dX_{t} &= -\int_{\markspace} \stoich \boldsymbol{\mu}(dt \times dz),
\end{align}
where $\boldsymbol{\mu} = [\mu_{1},\ldots,\mu_{R}]^{T}$. Here, the
time $\tau-t$ to the arrival of the next reaction of type $r$ is
exponentially distributed with intensity $w_{r}(X_{t-})$. Note that,
by virtue of the nature of the propensities, the intensities of the
counting processes therefore depend nonlinearly on the state
\cite[Chap.~II.3]{pointPQ}.

Using that the point processes are independent and therefore have no
common jump times \cite[Chap.~8.1.3]{BremaudMC}, we can obtain a
sometimes more transparent notation in terms of a \emph{scalar}
counting measure. Define for this purpose and for any state $x$ the
cumulative intensities
\begin{align}
  \label{eq:cumulative}
  W_{r}(x) &= \sum_{s = 1}^{r} w_{s}(x),
\end{align}
such that the total intensity is given by $W(x) \equiv W_{R}(x)$. Let
the marks $z_{i}$ be uniformly distributed on $\markspace$. Then the
frequency of each reaction can be controlled through a set of
indicator functions $\hatw_{r} : \Intdom_{+}^{D} \times \markspace \to
\{0,1\}$ defined according to
\begin{align}
  \label{eq:indicator}
  \hatw_{r}(x; \, z) &= \left\{ \begin{array}{l}
    1 \quad \mbox{ if } W_{r-1}(x) < z W(x) \le W_{r}(x), \\
    0 \quad \mbox{ otherwise.}
  \end{array} \right.
\end{align}
Put $\hatw(x) \equiv [\hatw_{1}(x; \, z), \ldots, \hatw_{R}(x; \,
z)]^{T}$ and define also for later use the indicator form
\begin{align}
  \hatF(x; \, z) &= -\stoich \hatw(x; \, z),
  \intertext{such that}
  F(x) &= \int_{\markspace} \hatF(x; \, z) W(x) \, dz,
\end{align}
where $F(x)$ is defined in \eqref{eq:ODE}.

The jump SDE \eqref{eq:SDE0} can now be written in terms of a scalar
counting random measure $\mu$ through a state-dependent \emph{thinning
  procedure} \cite[Chap.~7.5]{point_processes},
\begin{align}
  \label{eq:SDE}
  dX_{t} &= -\int_{\markspace}
  \stoich \hatw(X_{t-}; \, z) \, \mu(dt \times dz).
\end{align}
Eq.~\eqref{eq:SDE} expresses exponentially distributed reaction times
that arrive according to a point process of intensity $m(dt \times dz)
= W(X_{t-}) \, dt \times dz$ carrying a mark which is uniformly
distributed in $\markspace$. This mark implies the ignition of one of
the reaction channels according to the acceptance-rejection rule
\eqref{eq:indicator}.

One frequently decomposes \eqref{eq:SDE} into its ``drift'' and
``jump'' parts,
\begin{align}
  \label{eq:SDEsplit}
  dX_{t} &= -\stoich w(X_{t}) \, dt-
  \int_{\markspace} \stoich \hatw(X_{t-}; \, z)(\mu-m)(dt \times dz).
\end{align}
The second term in \eqref{eq:SDEsplit} is driven by the compensated
measure $(\mu-m)$ and is a local martingale provided in essence that
the path is absolutely integrable (see \cite[Chap.~VIII.1, Corollary
  C4]{pointPQ} for details).


\subsubsection{Localization; It\^o's and Dynkin's formulas}

In analytic work it is often necessary to `tame' the process by
deriving results under a stopping time $\tau_{\stopping} := \inf_{t
  \ge 0}\{\|X_{t}\| > \stopping\}$ in some norm. Results for the
stopped process $X_{t \wedge \tau_{\stopping}}$ can then be
transferred to the original process by letting $\stopping \to \infty$
under suitable conditions.

Although there are many general versions of It\^o's change of
variables formula available in the setting of semi-martingales (see
for example \cite[Chap.~2.7]{jumpSDEs} and
\cite[Chap.~II.7]{protterSDE}), we shall get around with the following
simple version \cite[Chap.~4.4.2]{LevySDEs}. By the properties of the
semi-martingale pure jump process we have for $\hat{t} = t \wedge
\tau_{\stopping}$
\begin{align}
  \label{eq:iIto}
  f(X_{\hat{t}})-f(X_{0}) &= \sum_{0 < s \le \hat{t}} f(X_{s})-f(X_{s-})
  =\int_{0}^{\hat{t}} \int_{\markspace} f(X_{s})-f(X_{s-}) \, \mu(ds \times dz),
\end{align}
where the sum is over jump times $s \in (0,\hat{t}]$. Using that
  $X_{s} = X_{s-}-\stoich \hatw(X_{s-}; \, z)$ we can write this in
  differential form as
\begin{align}
  \label{eq:Ito}
  df(X_{t}) &= \int_{\markspace} f(X_{t-}-\stoich \hatw(X_{t-}; \, z))-
  f(X_{t-}) \, \mu(dt \times dz).
\end{align}
Alternatively, decomposing \eqref{eq:iIto} into drift- and jump parts
and taking expectation values we get, since the compensated measure is
a local martingale,
\begin{align}
  \nonumber
  Ef(X_{\hat{t}})-Ef(X_{0}) &= E \, \int_{0}^{\hat{t}} \int_{\markspace}
  f(X_{s-}-\stoich \hatw(X_{s-}; \, z))-
  f(X_{s-}) \, m(ds \times dz) \\
  \nonumber
  &= E \, \int_{0}^{\hat{t}} \int_{\markspace}
  \left[ f(X_{s-}-\stoich \hatw(X_{s-}; \, z))-
  f(X_{s-}) \right] W(X_{s-}) \, ds \times dz \\
  \label{eq:Dynkin}
  &= E \, \int_{0}^{\hat{t}} \sum_{r = 1}^{R} \left[ (f(X_{s}-\stoich_{r})-
  f(X_{s})) w_{r}(X_{s}) \right] \, ds.
\end{align}
This is Dynkin's formula \cite[Chap.~9.2.2]{BremaudMC} for the stopped
process and we note that \eqref{eq:dDynkin} is just a differential version.

\subsubsection{Coupled processes}
\label{subsubsec:coupling}

When considering stability properties we will need to compare
different trajectories with respect to the same noise. The details of
this coupling is not defined in either \eqref{eq:SDE0} or
\eqref{eq:SDE} and must in fact be chosen explicitly. Since this
equality is easy to inspect for a unit-rate Poisson process, the
viewpoint of \emph{local time} expressed in \eqref{eq:count} provides
an answer; two processes $X_{t}$ and $Y_{t}$ may be regarded as
coupled if and only if they are evolved using identical Poisson
processes $\Pi_{r}$, $r = 1,\ldots, R$ in \eqref{eq:Poissrepr} and
\eqref{eq:count}. This approach was first used by Kurtz
\cite{KurtzApprox} in the context of the random time change
representation. Algorithmically it implies the \emph{Common Reaction
  Path} (CRP) method for simulating coupled processes
\cite{sensitivitySSA} (see also \cite{ssa_parareal}).

A refinement of this construction was devised, also by Kurtz, in
\cite[\textit{(see Eqs.~(2.2)--(2.3))}]{countingProcesses}. In turn,
this approach implies the \emph{Coupled Finite Difference} method
\cite{parameterCTMC} (but see also \cite{tau_li,ME2SDE}), and is more
amenable to analysis. This is also the construction formalized below
under our current framework.

To obtain such a coupled version of \eqref{eq:SDE} we will have to
make the thinning dependent on both trajectories. This is achieved by
firstly replacing the cumulative intensities in \eqref{eq:cumulative}
with the \emph{base} (or \emph{minimal}) intensities
\begin{align}
  W_{r}^{(0)}(x,y) &= \sum_{s = 1}^{r} w_{s}(x) \wedge w_{s}(y),
\end{align}
and use the new total base intensity $W^{(0)}(x,y) \equiv
W_{R}^{(0)}(x,y)$ as the intensity of the counting measure $\mu_{0}$;
$m_{0}(dt \times dz) = W^{(0)}(X_{t-},Y_{t-}) \, dt \times dz$. We
also modify \eqref{eq:indicator} accordingly,
\begin{align}
  \hatw_{r}^{0}(x,y; \, z) &= \left\{ \begin{array}{l}
    1 \quad \mbox{ if } W_{r-1}^{(0)}(x,y) < z W^{(0)}(x,y)
    \le W_{r}^{(0)}(x,y), \\
    0 \quad \mbox{ otherwise.}
  \end{array} \right.
\end{align}

Secondly, we also define the \emph{remainder} intensity,
\begin{align}
  W_{r}^{(\delta)}(x,y) &= \sum_{s = 1}^{r} w_{s}(x) \vee
  w_{s}(y)-W_{r}^{(0)}(x,y) = \sum_{s = 1}^{r} |w_{s}(x)-w_{s}(y)|.
\end{align}
In analogy with the previous construction we have the associated total
intensity $W^{(\delta)}(x,y) \equiv W_{R}^{(\delta)}(x,y)$ and
counting measure $\mu_{\delta}$; $m_{\delta}(dt \times dz) =
W^{(\delta)}(X_{t-},Y_{t-}) \, dt \times dz$. This time the thinning
procedure is non-symmetric in its two first arguments,
\begin{align}
  \hatw_{r}^{\delta}(x,y; \, z) &= \left\{ \begin{array}{l}
    1 \quad \mbox{ if } W_{r-1}^{(\delta)}(x,y) < z W^{(\delta)}(x,y)
    \le W_{r-1}^{(\delta)}(x,y)+d(x,y), \\
    0 \quad \mbox{ otherwise,}
  \end{array} \right. \\
\intertext{with the non-symmetricity due to}
   d(x,y) &= w_{r}(x)-w_{r}(x) \wedge w_{r}(y).
\end{align}

As a concrete example of how this comparative thinning might be used,
consider the following variant of \eqref{eq:Ito},
\begin{align}
  \nonumber
  df(X_{t}-Y_{t}) = \int_{\markspace} &f\left( X_{t-}-\stoich
  \hatw^{\delta}(X_{t-},Y_{t-}; \, z)-Y_{t-}+\stoich
  \hatw^{\delta}(Y_{t-},X_{t-}; \, z) \right)- \\
  \label{eq:diff_f1}
  &f(X_{t-}-Y_{t-}) \, \mu_{\delta}(dt \times dz).
\end{align}
For this specific example, the terms governed by the base counting
measure $\mu_{0}$ cancel out altogether.

We mention also that an equivalent construction, but one that leads to
different algorithms, can be obtained via a thinning of a single
measure \cite{tau_li,ME2SDE}. Defining instead
\begin{align}
  W_{r}^{(+)}(x,y) &= \sum_{s = 1}^{r} w_{s}(x) \vee w_{s}(y),
\end{align}
implying the total intensity $W^{(+)}(x,y) \equiv W_{R}^{(+)}(x,y)$
and associated counting measure $\mu_{+}$; $m_{+}(dt \times dz) =
W^{(+)}(X_{t-},Y_{t-}) \, dt \times dz$. By construction the indicator
functions are now non-symmetric in their first two arguments,
\begin{align}
  \hatw_{r}^{+}(x,y; \, z) &= \left\{ \begin{array}{l}
    1 \quad \mbox{ if } W_{r-1}^{(+)}(x,y) < z W^{(+)}(x,y)
    \le W_{r-1}^{(+)}(x,y)+w_{r}(x), \\
    0 \quad \mbox{ otherwise.}
  \end{array} \right.
\end{align}
In analogy to \eqref{eq:diff_f1} we get
\begin{align}
  \nonumber
  df(X_{t}-Y_{t}) = \int_{\markspace} &f\left( X_{t-}-\stoich
  \hatw^{+}(X_{t-},Y_{t-}; \, z)-Y_{t-}+\stoich
  \hatw^{+}(Y_{t-},X_{t-}; \, z) \right)- \\
  \label{eq:diff_f2}
  &f(X_{t-}-Y_{t-}) \, \mu_{+}(dt \times dz).
\end{align}
This time, however, the intensity of the counting measure is generally
larger and the equivalence is obtained as a result of the thinning
procedure.

\subsubsection{The validity of the master equation}

With this much formalism developed, we may conveniently quote the
following result:
\begin{theorem}[\textit{\cite[Chap.~8.3.2, Theorem~3.3]{BremaudMC}}]
  \label{th:mastervalid}
  Under Assumption~\ref{ass:prop0}, and if additionally, for $t \in
  [0,T]$ it holds that
  \begin{align}
    E \, W(X_{t}) &< \infty,
  \end{align}
  then \eqref{eq:Master} is valid for $t \in [0,T]$.
\end{theorem}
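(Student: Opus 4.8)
The plan is to establish that under the stated integrability condition the formal manipulations leading to \eqref{eq:Master} are all legitimate, essentially by verifying the hypotheses of the Dynkin-type argument already sketched in \eqref{eq:dDynkin}--\eqref{eq:Dynkin}. The starting point is the path-wise identity \eqref{eq:iIto}, or rather its integrated version against the counting measure $\mu$, applied with the test function $f = \mathbf{1}_{\{x\}}$, the indicator of a fixed state. First I would write, for $t \in [0,T]$ and the stopping time $\tau_{\stopping}$,
\begin{align*}
  \mathbf{1}_{\{x\}}(X_{t \wedge \tau_{\stopping}})-\mathbf{1}_{\{x\}}(X_{0})
  &= \int_{0}^{t \wedge \tau_{\stopping}} \int_{\markspace}
     \mathbf{1}_{\{x\}}(X_{s-}-\stoich \hatw(X_{s-}; \, z))-
     \mathbf{1}_{\{x\}}(X_{s-}) \, \mu(ds \times dz),
\end{align*}
split $\mu = (\mu-m)+m$, and take expectations. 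The drift part against $m$ immediately produces, via \eqref{eq:indicator} and \eqref{eq:cumulative}, the right-hand side of \eqref{eq:Master} restricted to $X_{s-} \in \{x, x+\stoich_1, \ldots, x+\stoich_R\}$, each term weighted by the appropriate $w_r$; this is exactly the computation carried out in passing from the first to the last line of \eqref{eq:Dynkin}.

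The two technical points are then: (i) the compensated-measure term has zero expectation, and (ii) one may pass to the limit $\stopping \to \infty$ and differentiate in $t$. For (i), by the remark after \eqref{eq:SDEsplit} (citing \cite[Chap.~VIII.1, Corollary~C4]{pointPQ}) it suffices that the integrand is absolutely integrable against $m$ on $[0, t \wedge \tau_{\stopping}] \times \markspace$; since the integrand is bounded by $1$ in absolute value and $m(ds \times dz) = W(X_{s-}) \, ds \times dz$, the relevant bound is $E \int_0^{t} W(X_{s-}) \, ds \le \int_0^t E\, W(X_s)\, ds < \infty$ by hypothesis and Tonelli — and on the stopped interval this is automatic since $W$ is bounded there by Assumption~\ref{ass:prop0} (finiteness of $w_r$ on finite arguments). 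For (ii), the same uniform-in-$\stopping$ bound $\left| \mathbf{1}_{\{x\}}(X_{s})-\mathbf{1}_{\{x\}}(X_{s-}) \right| \le 1$ together with $E \int_0^t W(X_s)\, ds < \infty$ lets me invoke dominated convergence as $\stopping \to \infty$ (noting $\tau_{\stopping} \to \infty$ a.s., which follows because the hypothesis forbids explosion on $[0,T]$), obtaining the integrated form
\begin{align*}
  E\,\mathbf{1}_{\{x\}}(X_{t})-E\,\mathbf{1}_{\{x\}}(X_{0})
  &= E \int_{0}^{t} \sum_{r=1}^{R}
     \left[ w_r(X_s+\stoich_r)\mathbf{1}_{\{x\}}(X_s+\stoich_r)
          - w_r(X_s)\mathbf{1}_{\{x\}}(X_s) \right] ds,
\end{align*}
i.e.\ $p(x,t)-p(x,0) = \int_0^t (\Master p)(x,s)\, ds$. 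Finally the integrand on the right is continuous in $s$ — again by dominated convergence using the $W$-integrability bound and right-continuity of the paths — so the fundamental theorem of calculus yields the differential form \eqref{eq:Master} for every $x$ and every $t \in [0,T]$.

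The main obstacle, and the only place the hypothesis $E\, W(X_t) < \infty$ is genuinely used, is the justification of the martingale property in step (i) and the interchange of limit/expectation/differentiation in step (ii): without control on $E\, W(X_t)$ the compensator of the jump part need not be finite and the "drift'' one peels off in \eqref{eq:Dynkin} is not well defined. Everything else — the algebra identifying the drift term with $\Master p$, and the a.s.\ path identity \eqref{eq:iIto} — is a bookkeeping exercise in the indicator functions $\hatw_r$. Since the statement is quoted verbatim from \cite[Chap.~8.3.2, Theorem~3.3]{BremaudMC}, in the paper itself this would most naturally be dispatched by reference rather than reproved, but the sketch above is the argument that reference contains.
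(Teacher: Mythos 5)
Your sketch is correct, but there is nothing in the paper to compare it against: the paper does not prove Theorem~\ref{th:mastervalid} at all, it simply quotes it from \cite[Chap.~8.3.2, Theorem~3.3]{BremaudMC} (``we may conveniently quote the following result''). What you have written is a reconstruction of the standard argument behind Kolmogorov's forward differential system, and it is consistent with the formalism the paper itself sets up around \eqref{eq:iIto}--\eqref{eq:Dynkin}: apply the pure-jump It\^o identity to $f=\mathbf{1}_{\{x\}}$, localize with $\tau_{\stopping}$, split off the compensator, identify the drift term with $\Master p$ via $E[w_{r}(X_{s})\mathbf{1}_{\{x\}}(X_{s}-\stoich_{r})]=w_{r}(x+\stoich_{r})p(x+\stoich_{r},s)$, and use the integrability of $W(X_{t})$ to kill the compensated-measure term and to pass $\stopping\to\infty$ and differentiate. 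That is exactly the proof the cited reference contains, so your identification of where the hypothesis $E\,W(X_{t})<\infty$ is genuinely needed (the martingale property of the compensated part and the interchange of limits) is on target.

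Two small points you gloss over, though both are inherited from the way the theorem is stated rather than defects of your argument. First, pointwise finiteness of $E\,W(X_{s})$ for each $s\in[0,T]$ does not by itself give $\int_{0}^{t}E\,W(X_{s})\,ds<\infty$ after Tonelli; one needs local boundedness (or measurability plus an a priori bound of the type later supplied by Proposition~\ref{prop:E1bound} together with Assumption~\ref{ass:ass}~(iii)) to make that integral finite. Second, the claim that ``the hypothesis forbids explosion'' is slightly circular as written, since $X_{t}$ must already be defined on $[0,T]$ for $E\,W(X_{t})$ to make sense; the clean statement is that regularity of the chain is part of the standing setup and the integrability condition is what upgrades the local martingale to a true martingale. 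Neither point invalidates the sketch, but both deserve a sentence if the argument were to be written out rather than cited.
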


Since the governing equation \eqref{eq:dDynkin} for the expected value
of $f(X_{t})$ is a direct consequence of \eqref{eq:Master}, we can
similarly conclude the following:
\begin{corollary}
  \label{cor:mastervalid}
  Under the assumptions of Theorem~\ref{th:mastervalid}, and if,
  moreover, in an arbitrary norm $\|\cdot\|$,
  \begin{align}
    \label{eq:finitef}
    E \, \|f(X_{t})\| &< \infty, \qquad t \in  [0,T],
  \end{align}
  then \eqref{eq:dDynkin} is valid for $t \in [0,T]$.
\end{corollary}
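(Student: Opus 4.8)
The plan is to run the formal computation displayed in \eqref{eq:dDynkin} and to keep track of exactly where the two hypotheses are consumed. By Theorem~\ref{th:mastervalid} the master equation \eqref{eq:Master} holds on $[0,T]$, so for each fixed $x \in \Intdom_{+}^{D}$ the map $t \mapsto p(x,t)$ is continuously differentiable with $\partial_{t} p(x,t) = (\Master p)(x,t)$, or in integrated form $p(x,t) = p(x,0)+\int_{0}^{t}(\Master p)(x,s)\,ds$. I would multiply this identity by $f(x)$, sum over $x$, and interchange $\sum_{x}$ with $\int_{0}^{t}$ to get
\begin{align}
  E^{x_{0}}[f(X_{t})] &= E^{x_{0}}[f(X_{0})]+\int_{0}^{t}(\Master p,f)(s)\,ds, \nonumber
\end{align}
after which differentiation in $t$, together with continuity of $s \mapsto (\Master p,f)(s)$, recovers the first line of \eqref{eq:dDynkin}.

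The next step is the ``summation by parts'' identity $(\Master p,f)=(p,\Masteradj f)$. Writing out $\Master p$ from \eqref{eq:Master}, the piece $-\sum_{x}\sum_{r}w_{r}(x)p(x,t)f(x)$ is already in the desired form, while the non-trivial contribution $\sum_{x}\sum_{r} w_{r}(x+\stoich_{r})p(x+\stoich_{r},t)f(x)$ I would reindex by the shift $x \mapsto x-\stoich_{r}$; by Assumption~\ref{ass:prop0} the factor $w_{r}$ vanishes on precisely those boundary states that the shift would otherwise omit, so the shifted sum may be taken over all of $\Intdom_{+}^{D}$ without change. Collecting terms yields $(p,\Masteradj f)=\sum_{x}p(x,t)\sum_{r}w_{r}(x)[f(x-\stoich_{r})-f(x)]$, consistent with the explicit form \eqref{eq:Masteradj}; since the inner sum over $r$ is finite this equals $\sum_{r}E^{x_{0}}[w_{r}(X_{t})(f(X_{t}-\stoich_{r})-f(X_{t}))]$, which is the last expression in \eqref{eq:dDynkin}.

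The only delicate point, and the one I expect to be the main obstacle, is the absolute convergence that legitimizes the interchange of $\sum_{x}$ with $\int_{0}^{t}$ (equivalently with $d/dt$) and the reindexing above, as well as the continuity of $s \mapsto (\Master p,f)(s)$. By Tonelli's theorem it is enough that $s \mapsto \sum_{x}|(\Master p)(x,s)|\,\|f(x)\|$ be locally integrable on $[0,T]$, and expanding $\Master p$ and reindexing once more bounds this series by finitely many terms of the form $E^{x_{0}}[w_{r}(X_{s})\,\|f(X_{s})\|]$ and $E^{x_{0}}[w_{r}(X_{s})\,\|f(X_{s}-\stoich_{r})\|]$. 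It is here that $E\,W(X_{s})<\infty$ and \eqref{eq:finitef} are brought to bear; for the polynomially bounded $f$ and propensities of interest in the sequel the finiteness of these expectations follows from the moment estimates developed later, and one may in any event regard this finiteness as part of the assertion that \eqref{eq:dDynkin} is valid. Once this bookkeeping is in place the chain of equalities in \eqref{eq:dDynkin} follows from the elementary manipulations above, exactly paralleling the proof of Theorem~\ref{th:mastervalid}.
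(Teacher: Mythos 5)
Your argument follows exactly the route the paper intends and leaves implicit: the corollary is stated without a proof beyond the one-sentence remark that \eqref{eq:dDynkin} is a direct consequence of \eqref{eq:Master}, and your elaboration---multiply the master equation by $f$, sum over the state space, reindex by $x\mapsto x-\stoich_{r}$ using Assumption~\ref{ass:prop0} to recover the adjoint form \eqref{eq:Masteradj}, and justify the interchanges by absolute convergence---is the correct way to fill that in. Your observation that $E\,W(X_{t})<\infty$ and $E\,\|f(X_{t})\|<\infty$ do not by themselves control the product expectations $E[w_{r}(X_{t})\,\|f(X_{t})\|]$ is a genuine subtlety the paper also glosses over, and your resolution (reading the required absolute convergence into the assertion that \eqref{eq:dDynkin} ``is valid,'' and noting that it holds for the polynomial $f$ and at most quadratic propensities for which the corollary is actually invoked later) is consistent with how the result is used in the paper.
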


In stating these results we have suppressed the conditional dependency
on the initial state which we for simplicity consider to be some
non-random state $x_{0}$.

\subsection{Concrete examples}

Consider the bi-molecular birth-death system,
\begin{align}
  \label{eq:bimol}
  \left. \begin{array}{c}
    \emptyset \xrightarrow{k_{1}} A \\
    \emptyset \xrightarrow{k_{1}} B \\
    A+B \xrightarrow{k_{2} ab} \emptyset
  \end{array} \right\},
\end{align}
that is, the system is in contact with a large reservoir such that
$A$- and $B$-molecules are emitted at a constant rate
$k_{1}$. Additionally, a \emph{decay} reaction happens with
probability $k_{2}$ per unit of time whenever two molecules meet. For
this example we have the stoichiometric matrix
\begin{align*}
  \stoich &= \begin{bmatrix*}[r]
    -1 & 0 & 1 \\
    0 & -1 & 1
  \end{bmatrix*}
  \intertext{and the vector propensity function}
  w(x) &= [k_{1},k_{1},k_{2}ab]^{T},
\end{align*}
where $x = [x_{1},x_{2}]^{T} = [a,b]^{T}$.

For a state $X_{t} = [A_{t},B_{t}]^{T}$, define $U_{t} = A_{t}-B_{t}
\in \Intdom$. It\^o's formula \eqref{eq:Ito} with $f(x) = x_{1}-x_{2}$
yields
\begin{align}
  dU_{t} &= df(X_{t}) = \int_{I} -[-1,1,0]
  \hatw(X_{t-}; \, z) \, \mu(dt \times dz),
\end{align}
which upon a moments consideration is just the same thing as the model
\begin{align}
  \emptyset \overset{k_{1}}{\underset{k_{1}}{\rightleftharpoons}} U,
\end{align}
that is, a constant intensity discrete random walk process. An
explicit solution is the difference between two independent Poisson
distributions,
\begin{align}
  U_{t} &= U_{0}+\Pi_{1}(k_{1}t)-\Pi_{2}(k_{1}t)
  \sim \mathcal{N}(U_{0},2k_{1}t), \quad \mbox{as $t \to \infty$,}
\end{align}
where $\mathcal{N}$ is a normally distributed random variable of the
indicated mean and variance. Hence $U_{t}$ fluctuates between
arbitrarily large and small values as $t \to \infty$.

\subsubsection{Reversible versions}

From time to time below we shall be concerned with the following
\emph{closed} version of \eqref{eq:bimol}, consisting of a single
reversible reaction,
\begin{subequations}
\begin{align}
  \label{eq:reversible}
  \begin{array}{c}
    A+B \overset{k_{1}ab}{\underset{k_{2}c}{\rightleftharpoons}} C
  \end{array}
\end{align}
This is clearly a finite system since the number $a+b+2c$ is always
preserved. An \emph{open} version of the same system is
\begin{align}
  \label{eq:reversibleopen}
  \begin{array}{c}
    A+B \overset{k_{1}ab}{\underset{k_{2}c}{\rightleftharpoons}} C
    \overset{k_{3}c}{\underset{k_{4}}{\rightleftharpoons}} \emptyset,
  \end{array}
\end{align}
\end{subequations}
and will prove to be a useful example in the stochastic setting since
formally, all states in $\Intdom_{+}^{3}$ are reachable. For
\eqref{eq:reversible} we have
\begin{align}
  \stoich &\equiv \begin{bmatrix*}[r]
      1 & -1 \\
      1 & -1 \\
      -1 & 1
  \end{bmatrix*}, \quad
  w(x) \equiv [k_{1}ab,k_{2}c]^{T}, \\
  \intertext{while \eqref{eq:reversibleopen} is represented by}
  \stoich &\equiv \begin{bmatrix*}[r]
      1 & -1 & 0 & 0 \\
      1 & -1 & 0 & 0 \\
      -1 & 1 & 1 & -1
  \end{bmatrix*}, \quad
  w(x) \equiv [k_{1}ab,k_{2}c,k_{3}c,k_{4}]^{T}.
\end{align}
These examples, while very simple to deal with, will provide good
counterexamples in both Section~\ref{sec:det} and \ref{sec:stoch}.


\section{Deterministic stability}
\label{sec:det}

In this section we shall be concerned with the deterministic drift
part of the dynamics \eqref{eq:SDEsplit}. We are interested in
techniques for judging the stability of the time-homogeneous ODE
\eqref{eq:ODE}, the so-called reaction rate equations implied by the
rates \eqref{eq:prop}. Stability and continuity with respect to
initial data are considered in Sections~\ref{subsec:stab} and
\ref{subsec:cont}. The main motivation for this discussion stems from
the observation that assumptions that \emph{do not} hold in this very
basic setting are unlikely to hold in the stochastic case. In
Section~\ref{subsec:constants}, techniques for explicitly obtaining
all our postulated constants are discussed. A good point in favor of
taking the time to describe these techniques is that we have not found
such a discussion elsewhere.

Initially we will consider states $x \in \Realdom^{D}$, but we will
soon find it convenient to restrict the treatment to $x \in
\Realdom_{+}^{D}$. In order to remain valid also in the discrete
stochastic setting, however, constructed counterexamples will remain
relevant also when restricted to $\Intdom_{+}^{D}$.

\subsection{Stability}
\label{subsec:stab}


Many stability proofs can be thought of as comparisons with relevant
linear cases. This is the motivation for the well-known Grönwall's
inequality which we state in the following two versions.
\begin{lemma}
  \label{lem:gronwall}
  Suppose that $u'(t) \le A+\alpha u(t)$ for $t \ge 0$. Then
  \begin{align}
    u(t) &\le u(0) e^{\alpha t}+\frac{A}{\alpha}
    \left( e^{\alpha t}-1 \right). \\
    \intertext{The same conclusion holds irrespective of the
      differentiability of $u$ but with $\alpha \ge 0$ and under the
      weaker integral condition}
    u(t) &\le u(0)+\int_{0}^{t} A+\alpha u(s) \, ds.
  \end{align}
\end{lemma}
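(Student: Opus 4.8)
The plan is to prove both versions of Gr\"onwall's inequality, starting with the differential version and then reducing the integral version to it by a regularization argument.

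For the differential version, I would use the standard integrating-factor trick. Assume $u'(t) \le A + \alpha u(t)$ and first treat the case $\alpha \ne 0$. Multiply through by $e^{-\alpha t} > 0$ to obtain $\frac{d}{dt}\left( u(t) e^{-\alpha t} \right) = (u'(t) - \alpha u(t)) e^{-\alpha t} \le A e^{-\alpha t}$. Integrating from $0$ to $t$ gives $u(t) e^{-\alpha t} - u(0) \le \frac{A}{\alpha}\left( 1 - e^{-\alpha t} \right)$, and multiplying back by $e^{\alpha t}$ yields exactly
\begin{align*}
  u(t) &\le u(0) e^{\alpha t} + \frac{A}{\alpha}\left( e^{\alpha t} - 1 \right),
\end{align*}
valid for $\alpha$ of either sign. (The case $\alpha = 0$ follows directly by integration, or as the limit $\alpha \to 0$, giving $u(t) \le u(0) + At$, which is the natural reading of the right-hand side at $\alpha = 0$.)

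For the integral version, suppose now $\alpha \ge 0$ and $u(t) \le u(0) + \int_0^t A + \alpha u(s)\,ds$ without assuming $u$ differentiable. Set $v(t) := u(0) + \int_0^t A + \alpha u(s)\,ds$, which is absolutely continuous with $v'(t) = A + \alpha u(s)$ at almost every $t$, and satisfies $u(t) \le v(t)$ pointwise. Since $\alpha \ge 0$, monotonicity gives $v'(t) = A + \alpha u(t) \le A + \alpha v(t)$ for a.e. $t$, so $v$ satisfies the differential inequality (in the a.e./absolutely continuous sense). Applying the integrating-factor computation above to $v$ — which only requires $v e^{-\alpha t}$ to be absolutely continuous, which it is — gives $v(t) \le v(0) e^{\alpha t} + \frac{A}{\alpha}(e^{\alpha t} - 1)$, and since $v(0) = u(0)$ and $u(t) \le v(t)$ the claimed bound for $u$ follows.

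I do not anticipate a genuine obstacle here; this is a classical result. The only point requiring a little care is the role of the sign condition $\alpha \ge 0$ in the integral version: it is exactly what lets us pass from $v' = A + \alpha u$ to $v' \le A + \alpha v$ using $u \le v$, and without it the reduction breaks. A secondary technical point is justifying the integrating-factor step for merely absolutely continuous $v$ rather than $C^1$ functions, which is standard (the product rule holds a.e. and the fundamental theorem of calculus applies to absolutely continuous functions). One could alternatively give a self-contained proof of the integral version via Picard-type iteration, bounding $u(t)$ by the partial sums of $\sum_k \alpha^k (\text{iterated integrals})$ and recognizing the exponential series in the limit, but the comparison-with-$v$ argument is shorter and reuses the differential case directly.
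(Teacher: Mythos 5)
Your proof is correct and follows essentially the same route as the paper: an integrating factor $e^{-\alpha t}$ for the differential version, and for the integral version a comparison function (your $v$, the paper's $w$) that is shown to satisfy the differential inequality — using $\alpha \ge 0$ in exactly the place you identify — after which the differential case applies. The only cosmetic difference is that the paper first absorbs the constant $A$ via the substitution $u \mapsto u + A/\alpha$, which you handle inline.
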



The most immediate way of comparing the growth of solutions to the ODE
\eqref{eq:ODE} to those of a linear ODE is to require that the norm of
the driving function is bounded in terms of its argument;
\begin{align}
  \label{eq:absbndreq}
  \|F(x)\| &\le A+\alpha \|x\|,
  \intertext{since then by the triangle inequality,}
  \|x(t)\| \le \|x_{0}\|+\int_{0}^{t} \|F(x)\| \, dt &\le
  \|x_{0}\|+\int_{0}^{t} A+\alpha \|x(t)\| \, dt,
\end{align}
where Grönwall's inequality applies. Unfortunately,
\eqref{eq:absbndreq} is a too strict requirement for our applications.

\begin{proposition}
  \label{prop:bimol1}
  The bi-molecular birth-death system \eqref{eq:bimol} does not
  satisfy \eqref{eq:absbndreq}.
\end{proposition}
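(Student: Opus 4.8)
The plan is to compute the drift field $F$ explicitly and exhibit a sequence of states along which $\|F(x)\|$ grows quadratically while $\|x\|$ grows only linearly. From the stoichiometric matrix $\stoich$ of \eqref{eq:bimol} and $w(x) = [k_{1},k_{1},k_{2}ab]^{T}$ one finds, with $x = [a,b]^{T}$,
\begin{align*}
  F(x) &= -\stoich w(x) = \begin{bmatrix} k_{1}-k_{2}ab \\ k_{1}-k_{2}ab \end{bmatrix},
\end{align*}
so that in any norm $\|F(x)\| = |k_{1}-k_{2}ab| \cdot \|[1,1]^{T}\|$.

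Next I would restrict attention to the diagonal states $x = [n,n]^{T}$ with $n \in \Intdom_{+}$, which are legitimate states of the system. Along this sequence $\|F(x)\| = |k_{1}-k_{2}n^{2}| \cdot \|[1,1]^{T}\| = \Ordo{n^{2}}$ as $n \to \infty$, whereas $\|x\| = n\,\|[1,1]^{T}\| = \Ordo{n}$; here I use that all norms on $\Realdom^{2}$ are equivalent, so the factor $\|[1,1]^{T}\|$ is a fixed positive constant whatever norm is chosen. If \eqref{eq:absbndreq} held for some constants $A,\alpha$, then evaluating at $x = [n,n]^{T}$ would give $|k_{1}-k_{2}n^{2}|\,\|[1,1]^{T}\| \le A + \alpha n\,\|[1,1]^{T}\|$ for every $n$; dividing by $n^{2}$ and letting $n \to \infty$ forces $k_{2} \le 0$, contradicting $k_{2} > 0$.

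There is essentially no technical obstacle here; the only point requiring a word of care is that \eqref{eq:absbndreq} is stated for an unspecified norm, which is handled by the equivalence of norms on the finite-dimensional state space (and the diagonal states lie in $\Intdom_{+}^{2}$, so the counterexample survives the discrete restriction as well). The proposition merely makes precise the observation announced in the introduction, that the elementary bi-molecular interaction contributes a quadratic term that defeats any global linear growth bound.
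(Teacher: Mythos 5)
Your proof is correct and follows essentially the same route as the paper's: both compute $F(x) = -\stoich w(x)$ explicitly, evaluate along the diagonal states $a = b = N$, and observe that $\|F\|$ grows like $N^{2}$ while $\|x\|$ grows like $N$, which defeats any linear bound. The only cosmetic difference is that the paper works directly in the Euclidean norm (hence the explicit $\sqrt{2}$ factors) while you invoke equivalence of norms to cover an arbitrary choice; both are fine.
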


\begin{proof}
  We compute $\|F(x)\| = \|-\stoich w(x) \| = \sqrt{2}|k_{1}-k_{2}ab|$
  for a state $x = [a,b]^{T}$. Hence for $a = b = N = 0,1, \ldots$ we
  have for $N$ large enough that $\|F(x)\| = \sqrt{2} k_{2} \cdot
  N^{2}-\sqrt{2}k_{1}$, which can clearly never be bounded linearly in
  $\|x\| = \sqrt{2} N$.
\end{proof}

The problem with the simple condition \eqref{eq:absbndreq} is that it
does not take the direction of growth into account; the offending
quadratic propensity in \eqref{eq:bimol} actually \emph{decreases} the
number of molecules. To deal with this, let $x \in \Realdom^{D}$ be an
arbitrary vector defining an ``outward'' direction. The length of the
component of the driving function along this direction is $(x,F(x))$
and in order not to have $x$ driven too strongly out along this ray we
may, in view of Grönwall's inequality, naturally require that
$(x/\|x\|,F(x)) \le \mbox{constant} \times \|x\|$ for $\|x\|$
sufficiently large. Equivalently, for any $x$,
\begin{align}
  \label{eq:bndreq}
  (x,F(x)) &\le A+\alpha \|x\|^{2},
  \intertext{from which one deduces}
  \frac{d}{dt} \frac{\|x\|^{2}}{2} &= (x,F(x)) \le A+\alpha \|x\|^{2},
\end{align}
where Grönwall's inequality applies anew. The assumption
\eqref{eq:bndreq} is weaker than \eqref{eq:absbndreq} since the former
implies the latter by the Cauchy-Schwarz inequality. Indeed, as in the
proof of Proposition~\ref{prop:bimol1} it is readily checked that for
the bi-molecular birth-death system \eqref{eq:bimol}, we get $(x,F(x))
= k_{1}(a+b)-k_{2}(a+b)ab$ which this time readily can be bounded
linearly in terms of $\|x\|^{2} = a^{2}+b^{2}$.

Unfortunately, in the case of an infinite state space and strong
dependencies between the species the assumption \eqref{eq:bndreq} is
also often unrealistic.

\begin{proposition}
  \label{prop:reversible1}
  Neither \eqref{eq:reversible} nor \eqref{eq:reversibleopen} admits a
  bound of the kind \eqref{eq:bndreq}.
\end{proposition}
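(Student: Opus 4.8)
The plan is to show that for both networks the scalar product $(x,F(x))$ carries a genuinely cubic term that cannot be absorbed into $A+\alpha\|x\|^{2}$, and to make this quantitative by testing \eqref{eq:bndreq} along a single cheap ray of lattice points.

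First I would compute $F(x)=-\stoich w(x)$ for the closed system \eqref{eq:reversible} at a state $x=[a,b,c]^{T}$. With the shorthand $\psi:=k_{2}c-k_{1}ab$ one gets $F(x)=[\psi,\psi,-\psi]^{T}$, hence
\[
  (x,F(x)) = (a+b-c)(k_{2}c-k_{1}ab).
\]
For the open version \eqref{eq:reversibleopen} the two additional channels $C\rightleftharpoons\emptyset$ only contribute a term of degree at most two in $c$, giving
\[
  (x,F(x)) = (a+b-c)(k_{2}c-k_{1}ab)+k_{4}c-k_{3}c^{2}.
\]
In either case the obstruction is the cubic product $(a+b-c)(k_{2}c-k_{1}ab)$.

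Next I would substitute $x_{N}:=[N,N,3N]^{T}\in\Intdom_{+}^{3}$ for $N=1,2,\ldots$: here $a+b-c=-N$ and $k_{2}c-k_{1}ab=3k_{2}N-k_{1}N^{2}$, so $(x_{N},F(x_{N}))=k_{1}N^{3}-3k_{2}N^{2}$ for \eqref{eq:reversible} and $(x_{N},F(x_{N}))=k_{1}N^{3}-(3k_{2}+9k_{3})N^{2}+3k_{4}N$ for \eqref{eq:reversibleopen}; in both cases this is $k_{1}N^{3}+\Ordo{N^{2}}\to+\infty$. Since $\|x_{N}\|^{2}=11N^{2}$, the ratio $(x_{N},F(x_{N}))/\|x_{N}\|^{2}=k_{1}N/11+\Ordo{1}$ is unbounded, so no finite constants $A$, $\alpha$ can make \eqref{eq:bndreq} hold for all $x$ --- in fact not even for all $x\in\Intdom_{+}^{3}$.

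The only genuine choice to be made is the ray: the cubic term changes sign with each of its two factors, and allowing $c$ to grow as fast as $N^{2}$ would let $\|x\|^{2}$ catch up with the numerator. Fixing $a=b=N$ turns $c\mapsto(a+b-c)(k_{2}c-k_{1}ab)$ into a downward parabola with roots $c=2N$ and $c=k_{1}N^{2}/k_{2}$, and $c=3N$ lies strictly between them once $N$ is large, so both factors have the intended sign while $\|x_{N}\|$ stays of order $N$ --- which is all that is needed. It is worth stressing that \eqref{eq:reversible} nonetheless generates only bounded trajectories, since $a+b+2c$ is conserved; so what fails here is the \emph{global} bound \eqref{eq:bndreq}, not stability of the dynamics, which is precisely the reason a more refined, localized condition is pursued in the sequel.
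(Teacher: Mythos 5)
Your proof is correct and follows essentially the same route as the paper: testing \eqref{eq:bndreq} along the ray $x=(N,N,3N)$, computing $(x,F(x))=(a+b-c)(k_{2}c-k_{1}ab)=k_{1}N^{3}-3k_{2}N^{2}$ (plus lower-order terms in the open case), and noting this cannot be dominated by $\|x\|^{2}=11N^{2}$. The extra detail you supply --- the explicit open-system computation and the remark on why the ray is chosen between the two roots of the parabola --- is consistent with, and slightly more explicit than, the paper's one-line treatment.
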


\begin{proof}
  As in the proof of Proposition~\ref{prop:bimol1} we look at a ray
  $x^{T} = (a,b,c) = (N,N,3N)$ parametrized by a non-negative integer
  $N$. For \eqref{eq:reversible} we compute $(x,F(x)) = (x,-\stoich
  w(x)) = (a+b-c)(k_{2}c-k_{1}ab) = k_{1}N^{3}-3k_{2}N^{2}$, which
  clearly cannot be bounded linearly in $\|x\|^{2} = 11N^{2}$. The
  same argument applies also to \eqref{eq:reversibleopen}.
\end{proof}

This negative result can perhaps best be appreciated as a kind of loss
of information about the dependencies between the species in the
functional form of the condition \eqref{eq:bndreq}. The number of $A$-
and $B$-molecules is strongly correlated with the number of
$C$-molecules such that, in fact, in \eqref{eq:reversible} $a+b+2c$ is
a preserved quantity. By contrast, in \eqref{eq:bndreq} the growth of
$\|x\|^{2}$ is estimated from the sum of the growth of the individual
elements of $x$ as if they where independent.

A way around this limitation can be found provided that we leave the
general case $x \in \Realdom^{D}$. We therefore specify the discussion
to the positive quadrant $x \in \Realdom_{+}^{D}$ and assume from now
on that it can be shown \textit{a priori} that the initial data
$x_{0}$ belongs to this set and that the subsequent trajectory $x(t)$
never leaves it (compare Assumption~\ref{ass:prop0}).

It then follows that $\|x_{t}\|_{1} = (\onevec,x_{t}) = \onevect
x_{t}$, where $\onevec$ is the vector of length $D$ containing all
ones. This vector also defines a suitable ``outward'' vector for
states $x \in \Realdom_{+}^{D}$ since solutions to the ODE
\eqref{eq:ODE} cannot grow without simultaneously growing also in the
direction of $\onevec$.

Again, in view of Grönwall's inequality Lemma~\ref{lem:gronwall}, we
tentatively require that $(\onevec,F(x)) \le \mbox{constant} \times
\|x\|_{1}$ for $\|x\|_{1}$ sufficiently large. Equivalently, for any
$x$,
\begin{align}
  \label{eq:onereq}
  (\onevec,F(x)) &\le A+\alpha \|x\|_{1},
  \intertext{implying the bounded dynamics}
  \frac{d}{dt} \|x\|_{1} &= (\onevec,F(x)) \le A+\alpha \|x\|_{1}.
\end{align}
We remark in passing that the criterion \eqref{eq:onereq} is sharp in
the sense that if the reversed inequality can be shown to be true,
then the growth of solutions can be estimated from below.

\begin{example}
  \label{ex:favor}
  As a point in favor of this approach we compute for the bi-molecular
  birth-death system \eqref{eq:bimol}, $(\onevec,F(x)) =
  (\onevec,-\stoich w(x)) = 2k_{1}-2k_{2}ab$ which evidently falls
  under the assumption \eqref{eq:onereq} with $(A,\alpha) =
  (2k_{1},0)$. For the reversible case \eqref{eq:reversible} we
  similarly get $(\onevec,F(x)) = -k_{1}ab+k_{2}c$ such that
  \eqref{eq:onereq} applies with $(A,\alpha) = (0,k_{2})$. Finally,
  and in the same fashion, the open case \eqref{eq:reversibleopen} is
  seen to be covered by letting $(A,\alpha) = (k_{4},(k_{2}-k_{3})
  \vee 0)$.
\end{example}

The chosen ``outward'' vector $\onevec$ is by no means
special. Clearly, any strictly positive vector $\lvec$ may be used in
its place since $\|x\|_{1}$ and $\lnorm{x} := \lvect x$ are equivalent
norms over $\Realdom_{+}^{D}$. This is a general and useful
observation as it may be used to discard parts of a system that are
closed without any restrictions on the associated propensities.

%

\begin{example}
  \label{ex:reversiblel}
  For the reversible system \eqref{eq:reversible}, we have already
  noted that $a+b+2c$ is a conserved quantity such that the choice
  $\lvec = [1,1,2]^{T}$ yields $d/dt \lnorm{x} = 0$. The open case
  \eqref{eq:reversibleopen} also benefits from this weighted norm in
  that we get $d/dt \lnorm{x} \le 2k_{4}$.
\end{example}

\begin{example}
  \label{ex:quadraticl}
  A slightly more involved model reads as follows:
  \begin{align*}
    \left. \begin{array}{rclcrcl}
        \emptyset & \overset{k_{1}}{\rightarrow} & A & &
        A+B & \overset{k_{2}ab}{\rightarrow} & 3C \\
        \emptyset & \overset{k_{1}}{\rightarrow} & B & &
        C & \overset{k_{3}c}{\rightarrow} & \emptyset
      \end{array} \right\}
  \end{align*}
  This example has been constructed such that the quadratic reaction
  increases $\|x\|_{1}$ and hence \eqref{eq:onereq} does not
  apply. However, taking $\lvec = [3,3,2]^{T}$ we get
  \begin{align*}
    \frac{d}{dt} \lnorm{x_{t}} &= 6k_{1}-2k_{3}c \le 6k_{1}.
  \end{align*}
\end{example}

This example hints at a general technique for obtaining suitable
candidates for the weight vector $\lvec$. Simply form the matrix
$\stoich_{2}$ consisting of the columns of $\stoich$ that are affected
by superlinear propensities. If a vector $\lvec > 0$ annihilating
these propensities exists, it can be found in the null-space of
$-\stoich_{2}^{T}$, readily available by linear algebra techniques. We
omit the details.


\subsection{Continuity}
\label{subsec:cont}

For well-posedness of the ODE \eqref{eq:ODE} we also need continuity
with respect to the initial data. We cannot ask for uniform Lipschitz
continuity since $\|F(x)-F(y)\| \le L \|x-y\|$ clearly implies
\eqref{eq:absbndreq} which we have already refuted. For the same
reason, a uniform \emph{one-sided Lipschitz condition}
$(x-y,F(x)-F(y)) \le \lambda \|x-y\|^{2}$ cannot be assumed to hold
since it implies \eqref{eq:bndreq}. The problem here is the global
nature of the estimate and it therefore seems to be reasonable to
localize this assumption. For instance, one might ask for
\begin{align}
  (x-y,F(x)-F(y)) &\le \lambda_{R} \|x-y\|^{2} \mbox{ whenever }
  \|x\|, \, \|y\| \le R, \\
  \intertext{presumably with some growth restrictions on
    $\lambda_{R}$. Although very general, such an analysis is likely
    to be less informative when it comes to estimating actual
    constants in later results. We shall therefore consider the
    following simpler version,}
  \label{eq:nlinreq}
  (x-y,F(x)-F(y)) &\le (M+\mu\|x+y\|_{1}) \|x-y\|^{2},
\end{align}
where the form of $\lambda_{R}$ has been restricted to better suit the
present purposes. Trivially, the norms $\|\cdot\|_{1}$ and $\|\cdot\|$
are equivalent and hence the specific choice made in
\eqref{eq:nlinreq} is just a matter of convenience. Since the idea
here is to use \textit{a priori} bounds on $x$ and $y$ when deriving
perturbation bounds, using $\|\cdot\|_{1}$ (or $\lnorm{\cdot}$) is
natural.

\begin{theorem}
  \label{th:ODEcont}
  Suppose that the ODE \eqref{eq:ODE} satisfies \eqref{eq:onereq} and
  \eqref{eq:nlinreq} and that initial data $x_{0} \in
  \Realdom_{+}^{D}$ implies a solution $x(t) \in
  \Realdom_{+}^{D}$. Then for any $t \in [0,T]$ there is a unique such
  solution $x(t)$. Moreover, define $C(t;\, x_{0},y_{0}) \equiv M+\mu
  \|x_{t}+y_{t}\|_{1}$, where $x_{t}$ and $y_{t}$ are two trajectories
  associated with initial data $x_{0}, \, y_{0} \in \Realdom_{+}^{D}$,
  respectively. Then
  \begin{align}
    \label{eq:ODEcont}
    \|x(t)-y(t)\| &\le \|x_{0}-y_{0}\|
    \exp \left( \int_{0}^{t} C(s; \; x_{0},y_{0}) \, ds \right) \\
    \nonumber
    &= \|x_{0}-y_{0}\| \Big[ 1+
    \big(M+\mu(\|x_{0}+y_{0}\|_{1})\big)t+
    \Ordo{t^{2}} \Big].
  \end{align}
\end{theorem}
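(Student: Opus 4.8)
The plan is to treat existence, uniqueness and the continuity estimate \eqref{eq:ODEcont} together, deriving everything from the a priori bound \eqref{eq:onereq} and the localized one-sided condition \eqref{eq:nlinreq}. First I would establish a priori boundedness of any trajectory on $[0,T]$: by hypothesis $x(t)\in\Realdom_{+}^{D}$, so $\|x(t)\|_{1}=(\onevec,x(t))$ and the computation $\tfrac{d}{dt}\|x\|_{1}=(\onevec,F(x))\le A+\alpha\|x\|_{1}$ from \eqref{eq:onereq} combined with Gr\"onwall's inequality (Lemma~\ref{lem:gronwall}) yields $\|x(t)\|_{1}\le \|x_{0}\|_{1}e^{\alpha t}+\tfrac{A}{\alpha}(e^{\alpha t}-1)=:\rho(t)$ on $[0,T]$. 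Since $F$ is built from the propensities $w_{r}$, which are finite on finite arguments by Assumption~\ref{ass:prop0} and in fact locally Lipschitz (being polynomial in the elementary case), restricting to the compact set $\{\|x\|_{1}\le\rho(T)\}$ gives a locally Lipschitz vector field; the Picard–Lindel\"of theorem then produces a unique local solution, and the a priori bound prevents blow-up, so the solution extends uniquely to all of $[0,T]$. This handles the existence-and-uniqueness clause.

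For the continuity estimate, let $x_{t}$, $y_{t}$ be the two trajectories with data $x_{0}$, $y_{0}\in\Realdom_{+}^{D}$ and set $e(t)=x(t)-y(t)$. The key step is to differentiate $\tfrac{1}{2}\|e(t)\|^{2}$:
\begin{align*}
  \frac{d}{dt}\frac{\|e(t)\|^{2}}{2} &= (e(t),F(x_{t})-F(y_{t}))
  \le \big(M+\mu\|x_{t}+y_{t}\|_{1}\big)\|e(t)\|^{2}
  = C(t;\,x_{0},y_{0})\,\|e(t)\|^{2},
\end{align*}
using \eqref{eq:nlinreq} directly (no a priori bound on $x,y$ is even needed at this point because the coefficient is carried along). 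Writing $g(t)=\|e(t)\|^{2}$, this reads $g'(t)\le 2C(t)g(t)$ with $g$ absolutely continuous and $C$ continuous (hence locally integrable), so the time-dependent Gr\"onwall inequality gives $g(t)\le g(0)\exp(\int_{0}^{t}2C(s)\,ds)$, i.e.
\begin{align*}
  \|x(t)-y(t)\| &\le \|x_{0}-y_{0}\|\exp\Big(\int_{0}^{t}C(s;\,x_{0},y_{0})\,ds\Big),
\end{align*}
which is the first line of \eqref{eq:ODEcont}. The Taylor expansion in the second line follows by writing $\exp(\int_{0}^{t}C\,ds)=1+\int_{0}^{t}C(s)\,ds+\Ordo{t^{2}}$ and observing that $\int_{0}^{t}C(s)\,ds = \big(M+\mu\|x_{0}+y_{0}\|_{1}\big)t+\Ordo{t^{2}}$ because $C$ is continuous with $C(0)=M+\mu\|x_{0}+y_{0}\|_{1}$; the $\Ordo{t^{2}}$ absorbs the variation of $\|x_{t}+y_{t}\|_{1}$ near $t=0$, which is controlled since the trajectories are $C^{1}$.

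The main obstacle is not any single computation but making sure the version of Gr\"onwall actually applies: \eqref{eq:nlinreq} only gives a differential inequality for $\|e\|^{2}$ when both solutions exist and stay in $\Realdom_{+}^{D}$, and the coefficient $C(t)$ is time-dependent rather than constant, so the plain integral form of Lemma~\ref{lem:gronwall} must be upgraded to its time-varying analogue (obtained by the same integrating-factor argument with $\exp(-\int_{0}^{t}2C(s)\,ds)$). A secondary subtlety is justifying that $F$ is locally Lipschitz for the Picard step — for the elementary reactions \eqref{eq:el1}--\eqref{eq:el4} the propensities are polynomials so this is immediate, but to keep the statement self-contained one should note that \eqref{eq:nlinreq} itself already encodes a local Lipschitz-type control once an a priori bound is in force, so uniqueness can alternatively be read off directly from the estimate \eqref{eq:ODEcont} (two solutions with the same data have $\|x(t)-y(t)\|\le 0$).
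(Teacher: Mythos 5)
Your proposal is correct and follows essentially the same route as the paper: an \textit{a priori} bound on $\|x_t+y_t\|_1$ from \eqref{eq:onereq} via Gr\"onwall, the differential inequality $\tfrac{d}{dt}\|x-y\|^2 \le 2C(t)\|x-y\|^2$ from \eqref{eq:nlinreq} solved with the integrating factor $\exp(-\int_0^t 2C)$, and the order estimate from continuity of the trajectories. Your Picard--Lindel\"of discussion fills in the existence step that the paper leaves implicit, and your observation that uniqueness follows directly from \eqref{eq:ODEcont} is exactly the intended reading.
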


\begin{proof}
  Combining \eqref{eq:onereq} with Grönwall's inequality we get the
  \textit{a priori} estimate
  \begin{align*}
    \|x(t)+y(t)\|_{1} &\le \|x_{0}+y_{0}\|_{1}+
    \left( \alpha\|x_{0}+y_{0}\|_{1}+2A \right)
    \left( e^{\alpha t}-1\right)/\alpha.
  \end{align*}
  Hence the (bounded) solution to
  \begin{align*}
    \frac{d}{dt} \|x-y\|^{2} &= 2(x-y,F(x)-F(y)) \le 
    2C(t;\,x_{0},y_{0}) \|x-y\|^{2}.
  \end{align*}
  is readily found through its integrating factor. The order estimate
  is a consequence of the fact that
  \begin{align*}
    \int_{0}^{t} \|x_{s}\|_{1}-\|x_{0}\|_{1} \, ds = \Ordo{t^{2}},
  \end{align*}
  since the trajectory is continuous.
\end{proof}

\subsection{Bounds for elementary reactions}
\label{subsec:constants}

As briefly discussed by the end of Section~\ref{subsec:stab}, finding
bounds on $A$ and $\alpha$ in \eqref{eq:onereq} as well as a suitable
weight-vector $\lvec$ amounts to basic inequalities and some fairly
straightforward linear algebra manipulations. In this section we
therefore consider precise bounds in \eqref{eq:nlinreq} for the
elementary propensities \eqref{eq:el}. Since \eqref{eq:nlinreq} is
linear in $F$, a reasonable approach is to consider linear and
quadratic propensities separate (constant propensities trivially
satisfy \eqref{eq:nlinreq} with $M = \mu = 0 $).

\begin{proposition}[\textit{linear case}]
  \label{prop:linear}
  Write a set of $R$ linear propensities as $w_{r}(x) = q_{r}^{T}x$,
  $r = 1,\ldots,R$, each with the corresponding stoichiometric vector
  $\stoich_{r}$. Then $F(x) := -\sum_{r} \stoich_{r} w_{r}(x)$
  satisfies $(x-y,F(x)-F(y)) \le M \|x-y\|^{2}$ with $M =
  \lognorm{-\stoich Q^{T}}$ in terms of the Euclidean logarithmic norm
  $\lognorm{\cdot}$ and the matrix $Q$ containing the vectors $q_{r}$
  as columns. In particular, in the case of a single linear propensity
  and, if as is usually the case, $q_{j} = k\delta_{jn}$ is all-zero
  except for a single rate constant $k$ in the $n$th position, then
  this reduces to $M = k \, (-\stoich_{r,n}+\|\stoich_{r}\|)/2$.
\end{proposition}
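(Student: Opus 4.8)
The plan is to exploit the fact that, for linear propensities $w_{r}(x) = q_{r}^{T}x$, the vector field $F$ is itself linear: writing $Q = [q_{1},\ldots,q_{R}]$ we have $w(x) = Q^{T}x$ and hence $F(x) = -\stoich w(x) = Bx$ with $B := -\stoich Q^{T}$. Consequently $F(x)-F(y) = B(x-y)$ and the one-sided condition \eqref{eq:nlinreq} we must establish collapses to a pure quadratic-form estimate,
\begin{align*}
  (x-y,F(x)-F(y)) &= \big( (x-y),\, B(x-y) \big)
  = \Big( (x-y),\, \tfrac{1}{2}(B+B^{T})(x-y) \Big);
\end{align*}
that is, the coefficient $\mu$ in \eqref{eq:nlinreq} may be taken to be $0$, and it remains only to identify the optimal constant $M$.

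The second step is to invoke the spectral characterisation of the Euclidean logarithmic norm: $\lognorm{B}$ equals the largest eigenvalue of the symmetric part $\tfrac{1}{2}(B+B^{T})$, equivalently the smallest number such that $(z,Bz) \le \lognorm{B}\,\|z\|^{2}$ for every $z \in \Realdom^{D}$. Applying this with $z = x-y$ immediately yields $(x-y,F(x)-F(y)) \le \lognorm{-\stoich Q^{T}}\,\|x-y\|^{2}$, which is exactly the claim with $M = \lognorm{-\stoich Q^{T}}$ and $\mu = 0$. There is essentially no obstacle in this step; the only thing being used is that the Rayleigh-quotient maximum of a real symmetric matrix is attained and equals its top eigenvalue.

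The third step treats the displayed special case, where there is a single reaction with $q = k e_{n}$, so that $B = -k\,\stoich_{r} e_{n}^{T}$ is rank one and its symmetric part is $-\tfrac{k}{2}\big(\stoich_{r} e_{n}^{T} + e_{n}\stoich_{r}^{T}\big)$. I would compute the eigenvalues of the rank-(at most two) matrix $uv^{T}+vu^{T}$ with $u = \stoich_{r}$, $v = e_{n}$ by restricting to $\operatorname{span}\{u,v\}$ (the matrix annihilates the orthogonal complement): its nonzero eigenvalues are $u^{T}v \pm \|u\|\,\|v\| = \stoich_{r,n} \pm \|\stoich_{r}\|$. Multiplying by $-k/2$ and using $k \ge 0$ (forced by nonnegativity of the propensity on $\Intdom_{+}^{D}$), the largest eigenvalue of the symmetric part of $B$ is $\tfrac{k}{2}\big(-\stoich_{r,n}+\|\stoich_{r}\|\big)$, giving $M = k\,(-\stoich_{r,n}+\|\stoich_{r}\|)/2$ as stated. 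The only mildly delicate point in the whole argument is this last eigenvalue computation; it can equally well be done by writing out the characteristic polynomial of $uv^{T}+vu^{T}$ on the two-dimensional invariant subspace, or by completing $v/\|v\|$ to an orthonormal basis, but either route is routine linear algebra.
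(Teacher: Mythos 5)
Your proof is correct and follows essentially the same route as the paper: the general bound is exactly the spectral (Rayleigh-quotient) characterisation of the Euclidean logarithmic norm applied to the linear map $B=-\stoich Q^{T}$, and the special case reduces to the extremal eigenvalue of the symmetric part of the rank-one matrix $-k\,\stoich_{r}e_{n}^{T}$. The only cosmetic difference is that you diagonalise $uv^{T}+vu^{T}$ on its two-dimensional invariant subspace to read off the nonzero eigenvalues $u^{T}v\pm\|u\|\,\|v\|$, whereas the paper writes out the componentwise eigenvalue equations and solves the resulting quadratic; both yield the same extremal root.
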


\begin{proof}
  The first assertion is immediate since the smallest such constant
  $M$ by definition is the logarithmic norm (see
  e.g.~\cite{lognorm}). To compute $\lognorm{-\stoich_{r} q_{r}^{T}}$
  when $q$ has the form indicated, we determine the extremal
  eigenvalue of $-(\stoich_{r}q_{r}^{T}+q_{r}\stoich_{r}^{T})/2$. By
  the (signed) scaling invariance of the logarithmic norm we may
  without loss of generality take $k \equiv 1$. The spectral relation
  for an eigenpair $(\lambda,z)$ can be written as
  \begin{align*}
    -&\frac{1}{2} \stoich_{r,j} z_{n} = \lambda z_{j},
    \quad j = 1,2,\ldots, D, \; j \not = n, \\
    -&\frac{1}{2} \stoich_{r,n} z_{n}-\frac{1}{2} \stoich_{r}^{T}z =
    \lambda z_{n}.
  \end{align*}
  For non-zero $\lambda$ the first relation can be solved for
  $z_{j}$. When inserted into the second relation, using that $z_{n}
  \not = 0$ (or otherwise $z = 0$), we get a quadratic equation for
  $\lambda$ with a single extremal root.
\end{proof}

\begin{example}
  The simple special case in Proposition~\ref{prop:linear} is
  generally sharp except for when there are linear reactions affecting
  \emph{all} species considered in the model. For example, in a
  one-dimensional state space, the single decay $A \to \emptyset$ with
  propensity $w_{1}(a) = k a$ allows the optimal value $M = -k$. In
  general $D$-dimensional space, a chain with unit rate constants of
  the form $A_{1} \to A_{2} \to \cdots \to A_{D} \to \emptyset$, or a
  closed loop in which the last transition is replaced with $A_{D} \to
  A_{1}$, both admit bounds $M \le 0$ as an inspection of the
  Gershgorin-discs of $-(\stoich+\stoich^{T})/2$ shows.

  Other than for those special examples, for the most important linear
  cases, Table~\ref{tab:linear} summarizes the bounds as obtained from
  the special case in Proposition~\ref{prop:linear} (with all reaction
  constants normalized to unity).

  \begin{table}[h!]
    \begin{center}
      \begin{tabular}{lp{1cm}l}
        Reaction & & Bound on $M$ \\
        \hline
        $A \to \emptyset$ & & $0$ \\
        $A \to B$ & & $(\sqrt{2}-1)/2$ \\
        $A \to B+C$ & & $(\sqrt{3}-1)/2$
      \end{tabular}
    \end{center}
    \caption{Linear propensities and bounds of $M$ in
      \eqref{eq:nlinreq}.}
    \label{tab:linear}
  \end{table}
\end{example}

\begin{proposition}[\textit{quadratic case}]
  \label{prop:quadratic}
  Write a general quadratic propensity as $w_{r}(x) = x^{T}Sx$ with
  $S$ a symmetric matrix. Then $F_{r}(x) = -\stoich_{r} w_{r}(x)$
  satisfies \eqref{eq:nlinreq} with $M = 0$ and $\mu =
  \|x+y\|_{1}^{-1} \, \lognorm{-\stoich_{r}(x+y)^{T}S} \le
  \|\stoich_{r}\|\|S\|$. For the special case that $S_{ij} = k
  (\delta_{im}\delta_{jn}+\delta_{jm}\delta_{in})/2$ there holds $\mu
  \le k \, \max_{j \in \{m,n\}} (-\stoich_{r,j}+\|\stoich_{r}\|)/4$.
\end{proposition}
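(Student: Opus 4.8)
The plan is to mimic the structure of the proof of Proposition~\ref{prop:linear}, exploiting the fact that a quadratic propensity evaluated along the chord $x-y$ produces a difference that is \emph{linear} in $x-y$ once the prefactor $S(x+y)$ is frozen. First I would write, for the symmetric matrix $S$,
\begin{align*}
  w_{r}(x)-w_{r}(y) &= x^{T}Sx-y^{T}Sy = (x-y)^{T}S(x+y),
\end{align*}
so that $F_{r}(x)-F_{r}(y) = -\stoich_{r}(x+y)^{T}S(x-y)$. Hence
\begin{align*}
  (x-y,F_{r}(x)-F_{r}(y)) &= -(x-y)^{T}\stoich_{r}(x+y)^{T}S(x-y),
\end{align*}
which is exactly a quadratic form in $x-y$ with (non-symmetric) matrix $-\stoich_{r}(x+y)^{T}S$. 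By the very definition of the Euclidean logarithmic norm (as in Proposition~\ref{prop:linear}, citing \cite{lognorm}) this is bounded by $\lognorm{-\stoich_{r}(x+y)^{T}S}\,\|x-y\|^{2}$, so \eqref{eq:nlinreq} holds with $M=0$ and $\mu = \|x+y\|_{1}^{-1}\lognorm{-\stoich_{r}(x+y)^{T}S}$. The crude bound $\mu \le \|\stoich_{r}\|\|S\|$ then follows from $\lognorm{B}\le\|B\|$, submultiplicativity of the spectral norm, and $\|(x+y)^{T}\| = \|x+y\|_{2} \le \|x+y\|_{1}$ on $\Realdom_{+}^{D}$ (all entries non-negative, so the $2$-norm is dominated by the $1$-norm).

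For the special structured case $S_{ij}=k(\delta_{im}\delta_{jn}+\delta_{jm}\delta_{in})/2$ I would first note that then $v^{T}S = \tfrac{k}{2}(v_{n}e_{m}^{T}+v_{m}e_{n}^{T})$ for any vector $v$, so with $v = x+y$ the matrix $-\stoich_{r}(x+y)^{T}S = -\tfrac{k}{2}\stoich_{r}\big(v_{n}e_{m}^{T}+v_{m}e_{n}^{T}\big)$ is supported on just the two columns $m$ and $n$. Exactly as in the linear special case, one computes $\lognorm{\cdot}$ as the extremal eigenvalue of the symmetrization; the eigenvalue problem decouples into the coordinates $\{m,n\}$ against the rest, giving a small explicit system whose extremal root can be read off. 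Carrying out the (signed) scaling invariance reduction to $k\equiv 1$ and solving, one obtains $\mu \le k\max_{j\in\{m,n\}}(-\stoich_{r,j}+\|\stoich_{r}\|)/4$; the factor $1/4$ (versus $1/2$ in the linear case) is traced to the extra factor $1/2$ from the symmetrization of $S$ together with the $\|x+y\|_{1}^{-1}$ normalization absorbing one power of $v$.

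The main obstacle is the structured eigenvalue computation: unlike the rank-one linear case, here the frozen prefactor $v^{T}S$ has rank up to two (columns $m$ and $n$), and when $m\ne n$ the symmetrized matrix $-(\stoich_{r}v^{T}S + S^{T}v\stoich_{r}^{T})/2$ is genuinely a sum of two rank-one terms, so the resulting secular equation is a $2\times2$ (rather than scalar) problem after eliminating the remaining coordinates. One must check that the extremal root is the one claimed and that replacing $v_{m},v_{n}$ by $\|x+y\|_{1}$ and taking the max over $j\in\{m,n\}$ yields a valid upper bound uniformly in $x,y\in\Realdom_{+}^{D}$. I expect the bookkeeping — and the verification that the $v$-dependence cancels against $\|x+y\|_{1}^{-1}$ to leave a bound depending only on the rate $k$ and the stoichiometry — to be the only delicate point; the rest is parallel to Proposition~\ref{prop:linear}.
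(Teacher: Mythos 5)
Your identity $w_{r}(x)-w_{r}(y)=(x-y)^{T}S(x+y)$, the reduction to the logarithmic norm of $-\stoich_{r}(x+y)^{T}S$, and the crude bound via $|\lognorm{\cdot}|\le\|\cdot\|$ together with $\|x+y\|_{2}\le\|x+y\|_{1}$ on the positive orthant are exactly the paper's argument for the general statement.

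For the special case the two routes diverge, and yours rests on a small misconception that makes the problem look harder than it is. You describe $-\stoich_{r}(x+y)^{T}S$ as ``a sum of two rank-one terms'' requiring a genuinely $2\times2$ secular equation. But $(x+y)^{T}S$ is a single \emph{row vector} $q^{T}$ (supported on coordinates $m$ and $n$), so $-\stoich_{r}q^{T}$ is itself rank one; its symmetrization has rank two but the extremal eigenvalue is given by the same closed scalar formula as in the linear case, namely $\lognorm{-\stoich_{r}q^{T}}=\tfrac{1}{2}\bigl(-\stoich_{r}^{T}q+\|\stoich_{r}\|\|q\|\bigr)$. Carrying this out with $q=\tfrac{k}{2}\bigl(v_{n}e_{m}+v_{m}e_{n}\bigr)$, $v=x+y$, and then using $\sqrt{v_{m}^{2}+v_{n}^{2}}\le v_{m}+v_{n}\le\|v\|_{1}$ does deliver the claimed bound as a convex combination of the two candidate values, so your plan closes --- the ``delicate point'' you flag is no harder than Proposition~\ref{prop:linear}. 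The paper avoids even this by a different device: it splits $q=q_{1}+q_{2}$ into the two single-entry pieces and invokes \emph{sub-additivity} of the logarithmic norm, $\lognorm{-\stoich_{r}q^{T}}\le\lognorm{-\stoich_{r}q_{1}^{T}}+\lognorm{-\stoich_{r}q_{2}^{T}}$, so that the rank-one formula from Proposition~\ref{prop:linear} can be quoted verbatim for each piece. Your direct computation yields a marginally sharper intermediate estimate (it keeps $\sqrt{v_{m}^{2}+v_{n}^{2}}$ where sub-additivity gives $v_{m}+v_{n}$); the paper's route requires no new eigenvalue work at all. Both arrive at $\mu\le k\,\max_{j\in\{m,n\}}(-\stoich_{r,j}+\|\stoich_{r}\|)/4$ after normalizing by $\|x+y\|_{1}$.
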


\begin{proof}
  Since $S$ is symmetric we have $x^{T}Sx-y^{T}Sy =
  (x+y)^{T}S(x-y)$. Hence an explicit expression for $\mu$ is obtained
  as follows:
  \begin{align*}
    \mu &\le \sup_{x,y \in \Intdom_{+}^{D}} \|x+y\|_{1}^{-1}\|x-y\|^{-2}
    (x-y,-\stoich_{r}(x+y)^{T}S(x-y)) \\
    &= \sup_{w \in \Intdom_{+}^{D}} \sup_{v \in \Intdom^{D}} 
    \|w\|_{1}^{-1}\|v\|^{-2}(v,-\stoich_{r}w^{T}Sv) \le 
    \sup_{\natop{u \ge 0}{\|u\|_{1} \le 1}} \lognorm{-\stoich_{r}u^{T}S}.
  \end{align*}
  The indicated upper bound is derived from the fact that
  $|\lognorm{\cdot}| \le \|\cdot\|$ \cite{lognorm}. For the useful
  special case, define first the vector $q = q_{1}+q_{2}$ in terms of
  $q_{1,j} = k \delta_{jm}(x_{n}+y_{n})/2$, and $q_{2,j} = k
  \delta_{jn}(x_{m}+y_{m})/2$. Using the fact that the logarithmic
  norm is sub-additive we can reuse the calculation in the proof of
  Proposition~\ref{prop:linear},
  \begin{align*}
    &\lognorm{-\stoich_{r} (x+y)^{T}S} = \lognorm{-\stoich_{r}q^{T}}
    \le \lognorm{-\stoich_{r}q_{1}^{T}}+
    \lognorm{-\stoich_{r}q_{2}^{T}} \\
    &\phantom{\lognorm{}}=
    (x_{n}+y_{n})k (-\stoich_{r,m}+\|\stoich_{r}\|)/4+
    (x_{m}+y_{m})k (-\stoich_{r,n}+\|\stoich_{r}\|)/4.
  \end{align*}
\end{proof}


\begin{example}
  The most important quadratic cases are summarized in
  Table~\ref{tab:quadratic}. For the dimerizations in the lower half
  of the table there is also a linear part $M$ in \eqref{eq:nlinreq}.

  \begin{table}[h!]
    \begin{center}
      \begin{tabular}{lp{0.5cm}ll}
        Reaction & & Bound on $\mu$ & $M$ \\
        \hline
        $A+B \to \emptyset$ & & $(\sqrt{2}-1)/4$ & \\
        $A+B \to C$ & & $(\sqrt{3}-1)/4$ & \\
        $A+B \to A$ & & $1/4$ & \\
        $A+B \to A+C$ & & $\sqrt{2}/4$ & \\
        \hline
        $A+A \to \emptyset$ & & $0$ & $2$ \\
        $A+A \to B$ & & $\sqrt{5}/2-1$ & $\sqrt{5}/2+1$
      \end{tabular}
    \end{center}
    \caption{Quadratic propensities and bounds of $M$, $\mu$ in
      \eqref{eq:nlinreq}.}
    \label{tab:quadratic}
  \end{table}
\end{example}

\medskip

\begin{example}
  The bi-molecular birth-death model \eqref{eq:bimol} admits the
  constants $(M,\mu) = (0,k_{2}(\sqrt{2}-1)/4)$ in
  \eqref{eq:nlinreq}. Similarly, the reversible cases
  \eqref{eq:reversible} and \eqref{eq:reversibleopen} both obeys
  \eqref{eq:nlinreq} with $(M,\mu) = (\sqrt{3}-1)/2 \times
  (k_{2},k_{1}/2)$. All these results are sharp except for the open
  case \eqref{eq:reversibleopen} for which one can obtain a slightly
  smaller constant $M$ by using the general formula stated in
  Proposition~\ref{prop:linear}.
\end{example}

\begin{example}
  As a highly prototypical example we consider the following natural
  extension of the bi-molecular birth-death model \eqref{eq:bimol},
  \begin{align*}
    \left. \begin{array}{ll}
      \emptyset \overset{k_{1}\Vol}{\underset{k_{3}a}{\rightleftharpoons}} A &
      \emptyset \overset{k_{1}\Vol}{\underset{k_{3}b}{\rightleftharpoons}} B \\
      A+B \xrightarrow{k_{2} ab/\Vol} \emptyset & \\
    \end{array} \right\},
  \end{align*}
  where in this example it is informative to consider the dependence
  on the system's size $\Vol$.  It is straightforward to show the
  bounds $(A,\alpha) \le (2k_{1}\Vol,-k_{3})$ in \eqref{eq:onereq} and
  hence that the system is effectively bounded despite being of open
  character. This is seen from the fact that, for states $\|x\|_{1}
  \ge 2k_{1}/k_{3} \cdot \Vol$, the dynamics is \emph{dissipative} in
  the $\|\cdot\|_{1}$-norm. Furthermore, from
  Proposition~\ref{prop:linear} and \ref{prop:quadratic} we get the
  sharp bounds $(M,\mu) \le (-k_{1}\Vol,k_{2}/\Vol \cdot
  (\sqrt{2}-1)/4)$ in \eqref{eq:nlinreq}. It follows that for states
  $\{x,y\}$ such that $\|x+y\|_{1} \lesssim 9.7 k_{1}/k_{2} \cdot
  \Vol^{2}$, the dynamics is \emph{contractive} in the Euclidean
  norm. For density dependent propensities we expect that $\|x\| \sim
  \Vol$ in any norm as $\Vol$ grows, and hence the region of
  contractivity grows in a relative sense. Intuitively one expects
  that these results offer an insight into the evolution of the
  process that is relevant also in the stochastic setting.
\end{example}


\section{Stochastic stability}
\label{sec:stoch}

We now consider the properties of the stochastic jump SDE
\eqref{eq:SDE}. For convenience we start by collecting all assumptions
in Section~\ref{subsec:ass}. In the stochastic setting the
requirements for existence and uniqueness are slightly stronger than
in the deterministic case such that the one-sided bound
\eqref{eq:nlinreq} needs to be augmented with an unsigned version,
implying essentially the assumption of at most quadratically growing
propensities. We demonstrate that this assumption is reasonable by
constructing a model involving cubic propensities and with unbounded
second moments. On the positive side we show in
Section~\ref{subsec:moments} that the assumptions are strong enough to
guarantee finite moments of any order during finite time intervals.

We prove existence and uniqueness of solutions to the jump SDE
\eqref{eq:SDE} in Section~\ref{subsec:wellposed}. A sufficient
condition for the existence of asymptotic bounds of the $p$th order
moment is given in Section~\ref{subsec:stability} where we also derive
some stability estimates.

\subsection{Working assumptions}
\label{subsec:ass}

We state formally the set of assumptions on the jump SDE
\eqref{eq:SDE} as follows.
\begin{assumption}
  \label{ass:ass}
  For arguments $x$, $y \in \Intdom_{+}^{D}$, $F(x) := -\stoich w(x)$,
  and weighted norm $\lnorm{x} := \lvect x$ we assume that
  \begin{enumerate}[(i)]
  \item \label{it:1bnd} $-\lvect \stoich w(x) \le A+\alpha \lnorm{x}$
    \textit{(``bounded growth'')},

  \item \label{it:bnd} $(-\lvect \stoich)^{2} w(x)/2 \le B+\beta_{1}
    \lnorm{x}+\beta_{2}\lnorm{x}^{2}$ \textit{(``absolutely bounded
      growth'')}
  \item
    \label{it:lip} $(\onevect \stoich^{2})|w(x)-w(y)| \le L(1+\|x+y\|_{1})\|x-y\|$,
  \item
    \label{it:1lip} $(x-y,F(x)-F(y)) \le (M+\mu\|x+y\|_{1}) \|x-y\|^{2}$.
  \end{enumerate}
  The parameters $\{A,B,\beta_{1},\beta_{2},L\}$ are assumed to be
  positive (with $\beta_{2}$ possibly zero) but we allow also negative
  values of $\{\alpha,M,\mu\}$. The vector $\lvec$ is normalized such
  that $\min_{i} \lvec_{i} = 1$; hence the bound $\|x\|_{1} \le
  \lnorm{x}$ is sharp.
\end{assumption}

After the original draft of the current paper was finished, the author
became aware of two other papers discussing very similar conditions
\cite{ergodic_jsde,momentbound_jsde}. In particular,
Assumption~\ref{ass:ass}~\eqref{it:1bnd}--\eqref{it:bnd} are also
found in \cite[Condition 1]{ergodic_jsde}. In fact, these very
conditions can be shown to be exactly what is needed to apply the
earlier and quite general theory found in
\cite[Theorem~7.1]{FosterLyapunov}.

In Assumption~\ref{ass:ass}~\eqref{it:bnd} the case $\beta_{2} = 0$
will merit special attention. For well-posedness it turns out that we
will need to require a higher regularity of the initial data when
$\beta_{2} > 0$ (see Theorem~\ref{th:exist}) and the condition for
ergodicity becomes more restrictive (see
Theorem~\ref{th:ergodicity}). In practice, $\beta_{2} = 0$ implies
that opposing quadratic reactions of the type
\begin{align}
   \left. \begin{array}{c}
     2X \xrightarrow{x(x-1)} 3X \\
     2X \xrightarrow{x(x-1)} X
 \end{array} \right\},
\end{align}
are impossible. Similarly, when $\beta_{1} = 0$ reactions of the type
\begin{align}
  X \xrightarrow{x} 2X
\end{align}
are excluded.

Note that \eqref{it:bnd} and \eqref{it:1lip} are redundant in the
sense that they are both implied by \eqref{it:lip}. However, as we saw
in Section~\ref{subsec:constants}, in \eqref{it:1lip} it is often
possible to find sharper constants $M$ and $\mu$ by considering this
bound in isolation. Also, although \eqref{it:lip} is stronger than
\eqref{it:1lip}, it is in particular valid for quadratic propensities
as can be seen from the representation used in the proof of
Proposition~\ref{prop:quadratic},
\begin{align}
  |w_{r}(x)-w_{r}(y)| &= |(x+y)^{T}S(x-y)| \le 
  \|S\|\|x+y\|_{1}\|x-y\|.
\end{align}

\subsubsection{The danger with cubic propensities}

Assumption~\ref{ass:ass}~\eqref{it:bnd} specifies the discussion to
propensities with at most quadratic growth, at least when measured in
the direction of the weight vector $\lvec$. To show that this is
natural we now demonstrate that additional care should be taken when
considering \emph{cubic} propensities.

\begin{example}
  \label{ex:cubic}
  Consider the model
  \begin{align*}
    \left. \begin{array}{c}
      3X \xrightarrow{x(x-1)(x-2)/2} X \\
      3X \xrightarrow{x(x-1)(x-2)} 4X 
  \end{array} \right\},
  \end{align*}
  such that the stoichiometric vector is given by $\stoich = [-2,1]$,
  and hence that the drift $-\stoich w(x) = 0$.
\end{example}

\begin{proposition}
  For the model in Example~\ref{ex:cubic}, if $X_{0} \ge 3$, then the
  second moment explodes in finite time.
\end{proposition}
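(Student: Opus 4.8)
\textit{Proof sketch.}
The plan is to convert the claim into a scalar differential inequality for the second factorial moment $g(t) := E[X_{t}(X_{t}-1)]$ and then run a blow-up-by-comparison argument. Working with $f(x)=x(x-1)$ instead of $x^{2}$ is convenient: $E[X_{t}^{2}] = g(t) + E[X_{t}]$ and, since the drift $-\stoich w$ vanishes identically (Example~\ref{ex:cubic}), the first moment never enters. Indeed, Dynkin's formula \eqref{eq:dDynkin} with $f(x)=x(x-1)$ gives, after the term linear in $x$ cancels by the zero-drift property,
\[
  \frac{d}{dt}\,g(t) \;=\; E\Bigl[\textstyle\sum_{r}\stoich_{r}^{2}w_{r}(X_{t})\Bigr]
  \;=\; 3\,E\bigl[X_{t}(X_{t}-1)(X_{t}-2)\bigr],
\]
because $\sum_{r}\stoich_{r}^{2}w_{r}(x) = 4w_{1}(x)+w_{2}(x) = 3x(x-1)(x-2)$.

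Next I would record the elementary pointwise estimate
\[
  x(x-1)(x-2) \;\ge\; \tfrac{1}{2}\bigl(x(x-1)\bigr)^{3/2} - 2 \qquad (x \in \Intdom_{+}),
\]
which follows from $x-2 \ge \tfrac{1}{2}\sqrt{x(x-1)}$ for $x\ge 4$ (equivalently $3x^{2}-15x+16\ge 0$) together with a finite check at $x=0,1,2,3$. Since $u\mapsto u^{3/2}$ is convex on $[0,\infty)$ and $X_{t}(X_{t}-1)\ge 0$, Jensen's inequality then turns the identity above into $\frac{d}{dt}g(t) \ge \tfrac{3}{2}g(t)^{3/2}-6$. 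As $g(0)=x_{0}(x_{0}-1)\ge 6$ and the right-hand side is positive for $g\ge 4$ (indeed $\ge \tfrac{3}{4}g^{3/2}$ there), $g$ is non-decreasing and stays $\ge 6$, so in fact $\frac{d}{dt}g(t)\ge \tfrac{3}{4}g(t)^{3/2}$ throughout. Comparison with the explicitly integrable $h'=\tfrac{3}{4}h^{3/2}$, $h(0)=g(0)$, yields $g(t)^{-1/2} \le g(0)^{-1/2}-\tfrac{3}{8}t$, whence $g(t)\uparrow\infty$ as $t$ approaches $t^{\ast} = \tfrac{8}{3}\bigl(x_{0}(x_{0}-1)\bigr)^{-1/2} \le \tfrac{8}{3\sqrt{6}}$; thus $E[X_{t}^{2}]=g(t)+E[X_{t}]$ cannot remain finite on all of $[0,t^{\ast}]$.

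The step I expect to be the real obstacle is making the first display rigorous, since Dynkin's formula is only guaranteed while $E\,W(X_{t}) = \tfrac{3}{2}E[X_{t}(X_{t}-1)(X_{t}-2)]<\infty$ (Corollary~\ref{cor:mastervalid} with $f(x)=x(x-1)$), which is partly what we are trying to refute. The clean way around this is the contrapositive: assume $E[X_{t}^{2}]<\infty$ throughout $[0,t^{\ast}]$; then the identity and the two inequalities above are valid on that interval and force a blow-up strictly before $t^{\ast}$, a contradiction. To justify the identity one localizes at $\tau_{P}=\inf\{t:\|X_{t}\|>P\}$, for which \eqref{eq:Dynkin} holds exactly for the bounded process $X_{t\wedge\tau_{P}}$, and then passes to the limit $P\to\infty$; it is worth noting here that the chain is non-explosive, its embedded jump chain being a mean-zero, bounded-increment random walk on $\Intdom_{+}$ absorbed on $\{0,1,2\}$, hence recurrent and absorbed in a.s.\ finitely many steps, so $\tau_{P}\uparrow\infty$ almost surely. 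This legitimizes monotone convergence in the nonnegative integral term while Fatou's lemma controls the moment on the left. The point to handle carefully is that the left side of the stopped identity is bounded by $(P+1)^{2}$, so the divergence must be extracted through the truncated Jensen estimate applied to $X_{t\wedge\tau_{P}}$ rather than by naively letting $P\to\infty$ inside a differential inequality.
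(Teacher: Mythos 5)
Your proposal is correct in substance and follows the same basic architecture as the paper's proof (Dynkin's formula, a pointwise algebraic lower bound, Jensen's inequality, and comparison with a superlinear ODE that blows up), but it runs the argument one moment level lower, and that is a genuine and worthwhile difference. The paper applies Dynkin to $f(x)=x^{2}$ to get $\tfrac{d}{dt}EX_{t}^{2}=3E[C_{3}(X_{t})]$ with $C_{3}(x)=x(x-1)(x-2)$, then applies Dynkin \emph{again} to $C_{3}$ itself to obtain $\tfrac{d}{dt}EC_{3}(X_{t})=9E[C_{3}(X_{t})(X_{t}-2/3)]\ge 9(EC_{3})^{4/3}$, shows the third moment blows up, and infers the second moment does too. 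You instead close the loop at the first step: using $f(x)=x(x-1)$, the zero-drift property kills the linear term and gives $\tfrac{d}{dt}g=3E[C_{3}(X_{t})]$ with $g=E[X_{t}(X_{t}-1)]$, and your elementary estimate $x(x-1)(x-2)\ge\tfrac12(x(x-1))^{3/2}-2$ (which checks out, including the finite verification at $x=3$) plus Jensen with exponent $3/2$ yields $g'\ge\tfrac34 g^{3/2}$ directly. This buys two things: you never need the equation for $\tfrac{d}{dt}EC_{3}$, whose right-hand side is a fourth-moment quantity and hence one more level removed from what is being assumed finite, and the blow-up conclusion lands on the second moment itself rather than being transferred from the third. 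Your observation that the chain is non-explosive (mean-zero bounded-increment embedded walk, eventual absorption) is correct and is not made explicit in the paper.

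The one place where your sketch is thinner than it should be is the localization step, and it is worth being precise about why. Writing $g_{P}(t)=E[f(X_{t\wedge\tau_{P}})]$, the stopped identity gives $g_{P}(t)=f(X_{0})+3\int_{0}^{t}E[C_{3}(X_{s});\,s<\tau_{P}]\,ds$, and monotone convergence handles the right-hand side; but Fatou applied to the left-hand side gives $E[f(X_{t})]\le\lim_{P}g_{P}(t)$, i.e.\ it bounds the second moment from \emph{above} by the quantity that satisfies the blow-up inequality, which is the wrong direction for concluding that $g$ itself explodes. Concretely, since up-jumps have size one, $g_{P}(t)=E[f(X_{t});\,\tau_{P}>t]+f(P+1)\Prob(\tau_{P}\le t)$, and the boundary term need not vanish as $P\to\infty$ unless one separately controls $\Prob(\sup_{s\le t}X_{s}>P)$ to order $o(P^{-2})$ --- which is essentially a uniform-integrability statement about $\sup_{s\le t}X_{s}^{2}$, not about $X_{t}^{2}$. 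Your contrapositive therefore needs either to assume the slightly stronger hypothesis $E\sup_{s\le t}X_{s}^{2}<\infty$, or to argue separately that a nonvanishing boundary term already forces $E\sup_{s\le t}X_{s}^{2}=\infty$ and then relate this back to $EX_{t}^{2}$. To be fair, the paper's own proof does not resolve this either --- it simply postulates finiteness of the second and third moments and manipulates \eqref{eq:dDynkin} formally --- so you are being held to a standard the source does not meet; but since you explicitly flagged the localization as ``the real obstacle,'' you should be aware that the Fatou step as described does not yet close it.
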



\begin{proof}
  Assume that both the second and the third moment are bounded for $t
  \in [0,T)$ with $T > 0$. From \eqref{eq:dDynkin} we get the
  governing equation
  \begin{align*}
    \frac{d}{dt} EX_{t}^{2} &= E \, [3X_{t}(X_{t}-1)(X_{t}-2)],
  \end{align*}
  such that the growth of the second moment remains bounded only
  provided that the third moment remains finite. It is convenient to
  look at the cumulative third order moment. From \eqref{eq:dDynkin},
  \begin{align*}
    \frac{d}{dt} E \, C_{3}(X_{t}) := 
    \frac{d}{dt} E \, X_{t}(X_{t}-1)(X_{t}-2) &= E \, [9 C_{3}(X_{t}) (X_{t}-2/3)].
  \end{align*}
  By the arithmetic-geometric mean inequality, $x-2/3 \ge x-1 \ge
  [x(x-1)(x-2)]^{1/3}$, such that by Jensen's inequality,
  \begin{align*}
    \frac{d}{dt} E \, C_{3}(X_{t}) &\ge 9 E \left[
      C_{3}(X_{t})^{4/3} \right] \ge 9 [E \, C_{3}(X_{t})]^{4/3}.
  \end{align*}
  We put $u^{3} = E \, C_{3}(X_{t})$ and get the differential
  inequality
  \begin{align*}
    \frac{d(1/u)}{dt} &\le -3,
  \end{align*}
  which can be integrated and rearranged to produce the bound
  \begin{align}
    E \, [X_{t}(X_{t}-1)(X_{t}-2)] \ge
    \frac{X_{0}(X_{0}-1)(X_{0}-2)}{1-3t \, X_{0}(X_{0}-1)(X_{0}-2)}.
 \end{align}
 Hence the third, and consequently also the second moment explode for
 some finite $t$ whenever $X_{0} \ge 3$.
\end{proof}

Interestingly, we note that if $X_{0} = 3$, then the probability that
the cubic decay transition occurs first is $1/3$, and if this happens
the state of the system will be stuck with a single molecule
indefinitely.

\subsection{Moment bounds}
\label{subsec:moments}

In this section we consider general moment bounds derived from
\eqref{eq:Dynkin} using localization. To get some guidance, let us
first assume that the differential form of Dynkin's formula
\eqref{eq:dDynkin} is valid. Since any trajectory $(X_{t})_{t \ge 0}$
by the basic Assumption~\ref{ass:prop0} will belong to
$\Intdom_{+}^{D}$, we may use that $\lnorm{X_{t}} =
(\lvec,X_{t})$. Hence from \eqref{eq:dDynkin} with $f(x) =
(\lvec,x)$ we get that
\begin{align}
  \label{eq:E1bound0}
  \frac{d}{dt} E\lnorm{X_{t}} &= (\lvec,F(X_{t}))  \le A+\alpha \lnorm{X_{t}},
\end{align}
by Assumption~\ref{ass:ass}~\eqref{it:1bnd}. Clearly, the differential
form of Grönwall's inequality in Lemma~\ref{lem:gronwall} applies
here. A correct version of this argument unfortunately looses the sign
of $\alpha$.

\begin{proposition}
  \label{prop:E1bound}
  If Assumption~\ref{ass:ass}~\eqref{it:1bnd} is true, then
  \begin{align*}
    E \lnorm{X_{t}} &\le \lnorm{X_{0}} \exp (\alpha_{+} t)+
    A (\exp (\alpha_{+} t)-1)/\alpha_{+},
  \end{align*}
  where $\alpha_{+} = \alpha \vee 0$.
\end{proposition}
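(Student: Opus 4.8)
The plan is to make the formal computation \eqref{eq:E1bound0} rigorous by \emph{localization}: work with the stopped process $X_{t \wedge \tau_{\stopping}}$, for which Dynkin's formula \eqref{eq:Dynkin} is already available, and only afterwards let $\stopping \to \infty$. Without loss of generality take the norm defining $\tau_{\stopping}$ to be $\|\cdot\|_{1}$, all norms on $\Realdom^{D}$ being equivalent. Fix $T>0$ and $t \le T$, write $\hat t = t \wedge \tau_{\stopping}$, and apply \eqref{eq:Dynkin} with $f(x) = (\onevec,x)$. This is legitimate and clean for two reasons: every trajectory stays in $\Intdom_{+}^{D}$ by Assumption~\ref{ass:prop0}, so $f$ coincides with $\|\cdot\|_{1}$ along the path; and for $s \le \tau_{\stopping}$ only finitely many lattice points are visited, where Assumption~\ref{ass:prop0} makes the propensities finite, so the integrand in \eqref{eq:iIto} is bounded and its integral against the compensated measure is a genuine (not merely local) martingale with vanishing expectation. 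Using $\sum_{r}\big(f(x-\stoich_{r})-f(x)\big)w_{r}(x) = (\onevec,-\stoich w(x)) = (\onevec,F(x))$ and Assumption~\ref{ass:ass}~\eqref{it:1bnd},
\begin{align*}
  E\|X_{\hat t}\|_{1} &= \|X_{0}\|_{1} + E\int_{0}^{\hat t}(\onevec,F(X_{s}))\,ds
   \le \|X_{0}\|_{1} + E\int_{0}^{\hat t} A + \alpha_{+}\|X_{s}\|_{1}\,ds,
\end{align*}
where the last inequality replaces $\alpha\|X_{s}\|_{1}$ by $\alpha_{+}\|X_{s}\|_{1}$, valid whatever the sign of $\alpha$; this is exactly where the sign of $\alpha$ is lost, as anticipated in the remark preceding the statement.

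Next I would remove the random upper limit. Since $\hat t \le t$ and $\mathbf{1}_{\{s \le \tau_{\stopping}\}}\|X_{s}\|_{1} \le \|X_{s \wedge \tau_{\stopping}}\|_{1}$, Tonelli's theorem gives, with $g_{\stopping}(t) := E\|X_{t \wedge \tau_{\stopping}}\|_{1}$,
\begin{align*}
  g_{\stopping}(t) &\le \|X_{0}\|_{1} + \int_{0}^{t} A + \alpha_{+}g_{\stopping}(s)\,ds,
\end{align*}
and $g_{\stopping}$ is finite and measurable, being bounded by $\stopping$ plus the maximal jump size. The integral version of Gr\"onwall's inequality, Lemma~\ref{lem:gronwall}, now applies with the nonnegative rate $\alpha_{+}$ and yields, uniformly in $\stopping$,
\begin{align*}
  g_{\stopping}(t) &\le \|X_{0}\|_{1}\exp(\alpha_{+}t) + A\big(\exp(\alpha_{+}t)-1\big)/\alpha_{+},
\end{align*}
with the right-hand side read as $\|X_{0}\|_{1} + At$ when $\alpha_{+}=0$.

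The last step is the passage $\stopping \to \infty$, and here some care is needed because non-explosion of $X_{t}$ is not yet known, so $\tau_{\stopping}$ need not increase to $\infty$ a priori. The estimate just obtained settles this too: on $\{\tau_{\stopping}\le t\}$ one has $\|X_{t\wedge\tau_{\stopping}}\|_{1} \ge \stopping$, hence $\stopping\,\Prob(\tau_{\stopping}\le t) \le g_{\stopping}(t)$ is bounded uniformly in $\stopping$, forcing $\Prob(\tau_{\stopping}\le t)\to 0$; thus $\tau_{\stopping}\uparrow\infty$ almost surely and the process does not explode on $[0,T]$. Consequently $X_{t\wedge\tau_{\stopping}}\to X_{t}$ almost surely, and Fatou's lemma transfers the uniform bound to $E\|X_{t}\|_{1}$, which is the assertion.

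The step I expect to demand the most care is verifying that the martingale term genuinely drops out of the \emph{stopped} version of \eqref{eq:iIto} and justifying the interchange of expectation and time integration leading to the inequality for $g_{\stopping}$; both rest on the boundedness afforded by the stopping time together with Assumption~\ref{ass:prop0}. Everything downstream is a routine Gr\"onwall-and-Fatou argument, the only conceptual bonus being that the same bound simultaneously certifies non-explosion on finite intervals.
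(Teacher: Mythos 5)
Your proposal is correct and follows essentially the same route as the paper's proof: localize with $\tau_{\stopping}$, apply Dynkin's formula \eqref{eq:Dynkin} with $f(x)=(\onevec,x)$, replace $\alpha$ by $\alpha_{+}$, invoke the integral form of Gr\"onwall's inequality, and let $\stopping\to\infty$ via Fatou. The only addition is your explicit Markov-inequality argument that $\Prob(\tau_{\stopping}\le t)\to 0$, a non-explosion detail the paper leaves implicit; it is a welcome clarification but not a different method.
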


Here and below we shall make use of the stopping time
$\tau_{\stopping} = \inf_{t \ge 0}\{\lnorm{X_{t}} > \stopping\}$ and
define $\hat{t} = t \wedge \tau_{\stopping}$.

\begin{proof}
  From \eqref{eq:Dynkin} with $f(x) = (\lvec,x)$ we get that
  \begin{align}
    E\lnorm{X_{\hat{t}}} &= \lnorm{X_{0}}+E \int_{0}^{\hat{t}}
    (\lvec,F(X_{s})) \, ds \le
    \lnorm{X_{0}}+E \int_{0}^{t}
    A+\alpha_{+} \lnorm{X_{\hat{s}}} \, ds,
  \end{align}
  By the integral form of Grönwall's inequality in
  Lemma~\ref{lem:gronwall} we deduce in terms of $Y_{t} := X_{t \wedge
    \tau_{\stopping}}$ that
  \begin{align}
    E \lnorm{Y_{t}} &\le \lnorm{X_{0}} \exp (\alpha_{+} t)+
    A (\exp (\alpha_{+} t)-1)/\alpha_{+}
  \end{align}
  such that the same bound holds for $X_{t}$ by letting $\stopping \to
  \infty$.
\end{proof}

We attempt a similar treatment for obtaining bounds in mean
square. Assuming tactically that \eqref{eq:dDynkin} is valid, writing
$\|x\|^{2} = x^{T}x$ we get after some work that
\begin{align}
  \label{eq:E2norm}
  \frac{d}{dt} E\|X_{t}\|^{2} &= 
  E \left[ \onevect \stoich^{2} w(X_{t})-
    2X_{t}^{T}\stoich w(X_{t}) \right] \\
  \intertext{where $\stoich^{2}_{ij} \equiv (\stoich_{ij})^{2}$. We
    expect from Grönwall's inequality that $E \|X_{t}\|^{2}$ grows at
    most exponentially with $\alpha t$ whenever}
  \label{eq:tentativecond}
  \onevect \stoich^{2} w(x)&-2x^{T}\stoich w(x) \le 
  A+\alpha \|x\|^{2}.
\end{align}
However, this tentative condition is often violated in practice since
the second term $-x^{T}\stoich w(x)$ $= (x,F(x))$, and since we
already know from Proposition~\ref{prop:reversible1} that this
quantity does not admit bounds in terms of $\|x\|^{2}$ even for very
simple problems.

More realistic conditions arise when seeking to bound
$\lnorm{X_{t}}^{2}$ instead.

\begin{proposition}
  \label{prop:E2bound}
  If for some constants $\gamma$ and $C$,
  \begin{align}
    \label{eq:E2cond}
    (\lvect \stoich)^{2} w(x)-2\lvect \stoich w(x)\lnorm{x} \le
    C+\gamma \lnorm{x}^{2},
  \end{align}
  ($(\lvect \stoich)^{2}$ understood elementwise), then $E
  \lnorm{X_{t}}^{2} \le \lnorm{X_{0}}^{2} \exp (\gamma_{+}t)+C(\exp
  (\gamma_{+} t)-1)/\gamma_{+}$.
\end{proposition}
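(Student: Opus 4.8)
The plan is to mimic the proof of Proposition~\ref{prop:E1bound}, this time applying the localized Dynkin formula \eqref{eq:Dynkin} (rather than its differential form \eqref{eq:dDynkin}) to the stopped process $X_{\hat{t}}$, $\hat{t} = t \wedge \tau_{\stopping}$, with the quadratic test function $f(x) = \|x\|_{1}^{2} = (\onevec,x)^{2}$; this is legitimate because $X_{t} \in \Intdom_{+}^{D}$ throughout, so $\|X_{s}\|_{1} = \onevect X_{s}$ with no absolute values to worry about. The first step is the elementary increment computation: for any $x \in \Intdom_{+}^{D}$ one has $f(x-\stoich_{r})-f(x) = (\onevect \stoich_{r})^{2}-2(\onevect \stoich_{r})(\onevect x)$, and hence, summing against the propensities and using $\onevect x = \|x\|_{1}$,
\begin{align*}
  \sum_{r=1}^{R} w_{r}(x)\left[f(x-\stoich_{r})-f(x)\right] &= (\onevect \stoich)^{2} w(x)-2\onevect \stoich w(x)\,\|x\|_{1},
\end{align*}
which is precisely the left-hand side of the hypothesis \eqref{eq:E2cond}. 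This is the ``some work'' alluded to around \eqref{eq:E2norm}, but carried out in the $\|\cdot\|_{1}$-norm rather than the Euclidean one, which is exactly what produces the more benign condition advertised just before the proposition.

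Next, writing $\phi_{\stopping}(t) := E\|X_{\hat{t}}\|_{1}^{2}$, Dynkin's formula \eqref{eq:Dynkin} combined with \eqref{eq:E2cond} gives
\begin{align*}
  \phi_{\stopping}(t) &= \|X_{0}\|_{1}^{2}+E\int_{0}^{\hat{t}} (\onevect \stoich)^{2} w(X_{s})-2\onevect \stoich w(X_{s})\|X_{s}\|_{1} \, ds \;\le\; \|X_{0}\|_{1}^{2}+E\int_{0}^{\hat{t}} B+\beta\|X_{s}\|_{1}^{2} \, ds.
\end{align*}
Evaluating \eqref{eq:E2cond} at $x = 0$ shows $B \ge (\onevect \stoich)^{2} w(0) \ge 0$, so $B+\beta\|x\|_{1}^{2} \le B+\beta_{+}\|x\|_{1}^{2}$ with the right-hand side non-negative; this non-negativity lets me replace $\beta$ by $\beta_{+} = \beta \vee 0$ and enlarge the upper limit from $\hat{t}$ to $t$ (using $\|X_{s}\|_{1} = \|X_{\hat{s}}\|_{1}$ for $s \le \hat{t}$ and positivity of the integrand beyond $\hat{t}$), and then interchange expectation and integration by Tonelli. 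I thus arrive at the closed integral inequality $\phi_{\stopping}(t) \le \|X_{0}\|_{1}^{2}+\int_{0}^{t} B+\beta_{+}\phi_{\stopping}(s) \, ds$, to which the integral form of Gr\"onwall's inequality in Lemma~\ref{lem:gronwall} applies since $\beta_{+} \ge 0$, yielding the stated bound for $\phi_{\stopping}(t)$. Finally, letting $\stopping \to \infty$ and invoking Fatou's lemma transfers the estimate to $E\|X_{t}\|_{1}^{2}$.

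I expect the only genuinely delicate point to be the same one flagged in the paragraph after Proposition~\ref{prop:E1bound}: any properly localized argument loses the sign of $\beta$, so one can only conclude the bound with $\beta_{+}$, not $\beta$, and one must be careful that it is precisely the non-negativity of $B+\beta_{+}\|x\|_{1}^{2}$ (together with $B \ge 0$) that justifies both the passage from $[0,\hat{t}]$ to $[0,t]$ and the subsequent $\stopping \to \infty$ limit via Fatou. Everything else — the increment identity, the use of the localized Dynkin formula, and the integral Gr\"onwall step — is routine and parallels Proposition~\ref{prop:E1bound} almost verbatim.
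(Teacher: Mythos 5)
Your proposal is correct and follows exactly the route the paper intends: the paper gives no separate proof but states that Proposition~\ref{prop:E2bound} ``follows the same pattern as for Proposition~\ref{prop:E1bound}, but using this time $f(x) = \|x\|_{1}^{2} = (\onevec,x)^{2}$ in \eqref{eq:Dynkin}'', which is precisely your argument, including the increment identity, the replacement of $\beta$ by $\beta_{+}$ under localization, the integral Gr\"onwall step, and the passage $\stopping \to \infty$. Your explicit observation that $B \ge (\onevect\stoich)^{2}w(0) \ge 0$ is a welcome detail the paper leaves implicit.
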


The proof of Proposition~\ref{prop:E2bound} follows the same pattern
as for Proposition~\ref{prop:E1bound}, but using this time $f(x) =
\lnorm{x}^{2} = (\lvec,x)^{2}$ in \eqref{eq:Dynkin}. The condition
\eqref{eq:E2cond} is typically more realistic than
\eqref{eq:tentativecond} since we recognize the term $-\lvect \stoich
w(x)\lnorm{x} = (\lvec,F(x))\lnorm{x}$, which under the evidently
reasonable Assumption~\ref{ass:ass}~\eqref{it:1bnd} is $\le
(A+\alpha\lnorm{x})\lnorm{x}$. It follows that if $(\lvect
\stoich)^{2} w(x)$ grows at most quadratically with $\lnorm{x}$, then
this assumption is sufficient to yield bounds in mean square. Stated
formally,
\begin{proposition}
  \label{prop:E2bound2}
  Under Assumption~\ref{ass:ass}~\eqref{it:1bnd} and \eqref{it:bnd}
  the condition \eqref{eq:E2cond} of Proposition~\ref{prop:E2bound} is
  true with $\gamma = 2\beta_{2}+2\alpha+2$ and $C =
  2B+\beta_{1}^{2}+A^{2}$.
\end{proposition}

\begin{proof}
  This is straightforward: we get by the assumptions and Hölder's
  inequality,
  \begin{align*}
    (\lvect \stoich)^{2} w(x)-2\lvect \stoich w(x)\lnorm{x} &\le
    2B+2\beta_{1} \lnorm{x}+2\beta_{2}\lnorm{x}^{2}+
    2(A+\alpha\lnorm{x})\lnorm{x},
  \end{align*}
  where an application of Young's inequality yields the indicated
  bounds.
\end{proof}

As a strong point in favor of our running assumptions we now
demonstrate that the above reasoning can be generalized: these two
conditions implies finite time stability in any order moment. We note
that in a recent manuscript \cite{momentbound_jsde}, related
conditions for the same results are proposed.

\begin{theorem}[\textit{Moment estimate}]
  \label{th:Epbound}
  Under Assumption~\ref{ass:ass}~\eqref{it:1bnd} and \eqref{it:bnd},
  for any integer $p \ge 1$,
  \begin{align}
    E\lnorm{X_{t}}^{p} &\le (\lnorm{X_{0}}^{p}+1) \exp (Ct)-1,
  \end{align}
  where $C > 0$ is a constant depending on the assumptions and on $p$. 
\end{theorem}

The proof of Theorem~\ref{th:Epbound} and some later results will
simplify using the following bound.

\begin{lemma}
  \label{lem:diffbound}
  Let $H(x) \equiv (x+y)^{p}-x^{p}$ with $x \in \Realdom_{+}$ and $y
  \in \Realdom$. Then for integer $p \ge 1$ we have the bounds
  \begin{align}
    \label{eq:signed_diffbound}
    H(x) &\le p y x^{p-1}+
    2^{p-4} p(p-1) y^{2} \left[ x^{p-2}+
      |y|^{p-2} \right], \\
   \label{eq:abs_diffbound}
    |H(x)| &\le p |y| 2^{p-2} \left[ x^{p-1}+
      |y|^{p-1} \right].
  \end{align}
\end{lemma}

\begin{proof}
  Both results follow from Taylor expansions;
  \begin{align*}
    H(x) &= p y x^{p-1}+\frac{p(p-1)}{2} y^{2} 
    \left[ x+\theta_{1} y \right]^{p-2}, \\
   |H(x)| &= p |y| \left| x+\theta_{2} y \right|^{p-1},
  \end{align*}
  respectively, where $\theta_{1,2} \in [0,1]$. Using the triangle
  inequality and the elementary inequality $(a+b)^{p} \le
  2^{p-1}(a^{p}+b^{p})$ the lemma is proved.
\end{proof}

\begin{proof}[\textit{Proof of Theorem~\ref{th:Epbound}}]
  Using \eqref{eq:Dynkin} with $f(X_{t}) \equiv [\lvect X_{t}]^{p}$
  we get
  \begin{align}
    \label{eq:Gdef}
    E\lnorm{X_{\hat{t}}}^{p} &= \lnorm{X_{0}}^{p}+
    E\int_{0}^{\hat{t}} \underbrace{\sum_{r = 1}^{R} w_{r}(X_{s}) \left[ 
      \left[ \lvect (X_{s}-\stoich_{r}) \right]^{p}-
      \left[ \lvect X_{s} \right]^{p} \right]}_{=: G(X_{s})} \, ds.
  \end{align}
  Using Lemma~\ref{lem:diffbound} \eqref{eq:signed_diffbound} and
  Assumption~\ref{ass:ass}~\eqref{it:1bnd} and \eqref{it:bnd} we
  obtain
  \begin{align*}
    G(x) &\le p(A+\alpha\lnorm{x}) \lnorm{x}^{p-1}+ \\
    &\phantom{\le}
    2^{p-3} p(p-1) (B+\beta_{1}\lnorm{x}+\beta_{2}\lnorm{x}^{2})
    (\lnorm{x}^{p-2}+\Delta^{p-2}),
  \end{align*}
  where $\Delta := \|\lvect \stoich\|_{\infty}$. Expanding and using
  Young's inequality with exponents $\{p/(p-1),p/(p-2)\}$ and
  conjugate exponents $\{p,p/2\}$, we get a bound
  \begin{align*}
    E\lnorm{X_{\hat{t}}}^{p} &\le \lnorm{X_{0}}^{p}+
    \int_{0}^{t} C(1+E\lnorm{X_{\hat{s}}}^{p}) \, ds,
  \end{align*}
  for some constant $C$ which thus depends on the
  assumptions. Applying Grönwall's inequality and letting $\stopping
  \to \infty$ we obtain the stated result.
\end{proof}

\subsection{Existence and uniqueness}
\label{subsec:wellposed}

We shall now prove that the jump SDE \eqref{eq:SDE} under
Assumption~\ref{ass:ass} has a uniquely defined and locally bounded
solution. To this end and following \cite[Sect.~3.1.2]{jumpSDEs}, we
introduce the following spaces of path-wise locally bounded processes:
\begin{align}
  \Sspace{p} &= \left\{ \begin{array}{l}
    X(t,\Probelem): \; X \mbox{ is
    $\Probfiltr_{t}$-adapted and $\Intdom_{+}^{D}$-valued such that } \\
  E \, \sup_{t \in [0,T]} \|X(t,\Probelem)\|_{1}^{p} <
  \infty \mbox{ for } \forall T < \infty \end{array} \right\}.
\end{align}

\begin{theorem}[\textit{Existence}]
  \label{th:exist}
  Let $X_{t}$ be a solution to \eqref{eq:SDE} under
  Assumption~\ref{ass:ass}~\eqref{it:1bnd} and \eqref{it:bnd} with
  $\beta_{2} = 0$. Then if $\lnorm{X_{0}}^{p} < \infty$, $\{X_{t}\}_{t
    \ge 0} \in \Sspace{p}$. If $\beta_{2} > 0$ then the conclusion
  remains under the additional requirement that $\lnorm{X_{0}}^{p+1} <
  \infty$.
\end{theorem}

\begin{proof}
  Below we let $C$ denote a positive constant which may be different
  on each occasion used. As before we use the stopping time
  $\tau_{\stopping} = \inf_{t \ge 0}\{\lnorm{X_{t}} > \stopping\}$ and
  put $\hat{t} = t \wedge \tau_{\stopping}$. We get from It\^o's
  formula (with $G$ defined in \eqref{eq:Gdef})
  \begin{align*}
    \lnorm{X_{\hat{t}}}^{p} &= \lnorm{X_{0}}^{p}+\int_{0}^{\hat{t}} 
    G(X_{s}) \, ds+ \\
    &\phantom{=} \int_{0}^{\hat{t}} \int_{\markspace}
    \sum_{r = 1}^{R} \hatw_{r}(X_{s-}; \; z) \left[ 
      \left[ \lvect (X_{s-}-\stoich_{r}) \right]^{p}-
      \left[ \lvect X_{s-} \right]^{p} \right] \, (\mu-m)(ds \times dz).
  \end{align*}
  Since the propensities are bounded for bounded arguments
  (Assumption~\ref{ass:prop0}), using the stopping time we find that
  the jump part is absolutely integrable and hence a local martingale
  $M_{\hat{t}}$. We estimate its quadratic variation under
  Assumption~\ref{ass:ass}~\eqref{it:bnd},
  \begin{align}
    \nonumber
    E \, [M]_{\hat{t}}^{1/2} &\le
    E \left[ \int_{0}^{\hat{t}} \int_{\markspace}
      \sum_{r = 1}^{R} \hatw_{r}(X_{s}; \; z) \left| 
      \left[ \lvect (X_{s}-\stoich_{r}) \right]^{p}-
      \left[ \lvect X_{s} \right]^{p} \right| \, \mu(ds \times dz) \right] \\
    \nonumber
    &= E \left[ \int_{0}^{\hat{t}} \int_{\markspace}
      \sum_{r = 1}^{R} \hatw_{r}(X_{s}; \; z)W(X_{s}) \left| 
      \left[ \lvect (X_{s}-\stoich_{r}) \right]^{p}-
      \left[ \lvect X_{s} \right]^{p} \right| \, ds \times dz \right] \\
    \nonumber
    &= E \left[ \int_{0}^{\hat{t}} 
      \sum_{r = 1}^{R} w_{r}(X_{s}) \left| 
      \left[ \lvect (X_{s}-\stoich_{r}) \right]^{p}-
      \left[ \lvect X_{s} \right]^{p} \right| \, ds \right] \\
    \label{eq:M1}
    &\le E \left[ \int_{0}^{\hat{t}} 
      \sum_{r = 1}^{R} p |\lvect \stoich_{r}| w_{r}(X_{s}) \; 2^{p-2}
      \left[ \lnorm{X_{s}}^{p-1}+|\lvect \stoich_{r}|^{p-1} \right]
      \, ds \right] \\
    \label{eq:M2}
    &\le E \left[ \int_{0}^{\hat{t}} 
      C(B+\beta_{1}\lnorm{X_{s}}+\beta_{2}\lnorm{X_{s}}^{2})
      \left[ \lnorm{X_{s}}^{p-1}+\Delta^{p-1} \right]
      \, ds \right] \\
    \label{eq:M3}
     &\le E \int_{0}^{\hat{t}} C(1+\lnorm{X_{s}}^{p}+
    \beta_{2}\lnorm{X_{s}}^{p+1}) \, ds
  \end{align}
  where $\Delta = \|\lvect \stoich\|_{\infty}$. In \eqref{eq:M1}
  Lemma~\ref{lem:diffbound}~\eqref{eq:abs_diffbound} was applied and
  Assumption~\ref{ass:ass}~\eqref{it:bnd} entered in
  \eqref{eq:M2}. Assume first that $\beta_{2} = 0$. Then for the drift
  part we have already constructed a suitable bound in
  Theorem~\ref{th:Epbound} such that
  \begin{align*}
    \lnorm{X_{\hat{t}}}^{p} &\le \lnorm{X_{0}}^{p}+\int_{0}^{\hat{t}} 
    C(1+\lnorm{X_{s}}^{p}) \, ds+|M_{\hat{t}}|.
  \end{align*}
  Taking supremum and expectation values we get from Burkholder's
  inequality \cite[Chap.~IV.4]{protterSDE} that
  \begin{align*}
    E \sup_{s \in [0,\hat{t}]} \lnorm{X_{s}}^{p} &\le
    \lnorm{X_{0}}^{p}+\int_{0}^{\hat{t}}
    C(1+E\sup_{s' \in [0,s]} \lnorm{X_{s'}}^{p}) \, ds.
  \end{align*}
  Writing $\lnorm{X}^{p}(t) \equiv \sup_{s \in [0,t]}
  \lnorm{X_{s}}^{p}$ we conclude that
  \begin{align*}
    E \lnorm{X}^{p}(t \wedge \tau_{\stopping}) &\le \lnorm{X_{0}}^{p}+\int_{0}^{t}
    C(1+E \lnorm{X}^{p}(s \wedge \tau_{\stopping})) \, ds.
  \end{align*}
  By Grönwall's inequality we have thus shown that $E \lnorm{X}^{p}(t
  \wedge \tau_{\stopping})$ can be bounded in terms of the initial
  data and time $t$. The result now follows by letting $\stopping \to
  \infty$ and using Fatou's lemma.

  Next assume that $\beta_{2} > 0$. Then we have to rely more directly
  on Theorem~\ref{th:Epbound} in \eqref{eq:M3},
  \begin{align*}
    E \, [M]_{\hat{t}}^{1/2} &\le
    \int_{0}^{\hat{t}} 
    C(1+E\lnorm{X_{s}}^{p+1}) \, ds 
    \le (e^{C\hat{t}}-1)(\lnorm{X_{0}}^{p+1}+1),
  \end{align*}
  where, although there is now a dependency on $\|X_{0}\|^{p+1}$, the
  rest of the argument carries through.
\end{proof}

For the case that the initial data $X_{0}$ is non-deterministic we see
that the general quadratic case
Assumption~\ref{ass:ass}~\eqref{it:bnd} with $\beta_{2} > 0$ requires
a one order higher moment of the initial data in order for a solution
in $\Sspace{p}$ to exist.

\begin{theorem}[\textit{Uniqueness}]
  \label{th:unique}
  Let Assumption~\ref{ass:ass}~\eqref{it:1bnd}--\eqref{it:1lip} hold
  true. Then any two paths $X_{t}$ and $Y_{t}$ coupled according to
  the description in Section~\ref{subsubsec:coupling} with $X_{0} =
  Y_{0}$ are equal.
\end{theorem}

We shall be using the observation that, for $x \in \Intdom_{+}^{D}$,
we have that $\|x\|_{1} \le \|x\|^{2}$ (referred below to as the
``integer inequality'').

\begin{proof}
  Under the same stopping time as before we get from It\^o's formula
  using the coupling described in Section~\ref{subsubsec:coupling}
  that
  \begin{align}
    \nonumber
    E\|X_{\hat{t}}-Y_{\hat{t}}\|^{2} &= E\,\int_{0}^{\hat{t}}
    2(X_{s}-Y_{s},F(X_{s})-F(Y_{s})) \\
    \nonumber
    &\hphantom{= E\,\int_{0}^{\hat{t}}}
    +(\onevect \stoich^{2})|w(X_{s})-w(Y_{s})|\, ds \\
    \nonumber
    &\le E\,\int_{0}^{\hat{t}}
    2(M+\mu\|X_{s}+Y_{s}\|_{1})\|X_{s}-Y_{s}\|^{2} \\
    \label{eq:E2flowbnd}
    &\hphantom{= E\,\int_{0}^{\hat{t}}}
    +\|\onevect \stoich^{2}\|_{\infty} 
    L(1+\|X_{s}+Y_{s}\|_{1})\|X_{s}-Y_{s}\| \, ds.
    \intertext{From the integer inequality we find that there is a
      constant $C \ge 0$ depending on $\stopping$ such that}
    \nonumber
    E\|(X-Y)(t \wedge \tau_{\stopping})\|^{2} &\le 
    \int_{0}^{t \wedge \tau_{\stopping}} C E\|X_{s}-Y_{s}\|^{2} \, ds
    \le \int_{0}^{t} C E\|(X-Y)(s \wedge \tau_{\stopping})\|^{2} \, ds.
  \end{align}
  Using that $E\|X_{0}-Y_{0}\|^{2} = 0$ and Grönwall's inequality we
  conclude that the only solution is the zero solution. Letting
  $\stopping \to \infty$ and using Fatou's lemma the statement is
  therefore proved.
\end{proof}

In a certain sense the previous result is trivial; from the Poisson
representation \eqref{eq:Poissrepr} we see that up to the first
explosion, a path is uniquely determined from an initial state and a
series of Poisson distributed events. However, and as we shall see
below, the above proof is prototypical for more involved
situations. An example would be when devising hybrid approximations in
continuous state space. Indeed, in the above proof, note that if the
integer inequality did not hold we would naturally have to rely on the
Cauchy-Schwartz inequality instead. With $u(t)^{2} =
E\|X_{t}-Y_{t}\|^{2}$ this leads to bounds of the typical kind
\begin{align}
  u(t)^{2} &\le \int_{0}^{t} C(u(s)+u(s)^{2}) \, ds,
\end{align}
for which $u(t) = \exp(Ct/2)-1$ is an admissible solution. This
observation shows that the integer inequality as used in the proof is
crucial; without it the integral inequality \eqref{eq:E2flowbnd}
admits growing solutions.

\subsection{Stability}
\label{subsec:stability}

Although Theorem~\ref{th:Epbound} shows that any moments are bounded
in finite time, a relevant question from the modeling point of view is
whether the first few moments remain bounded indefinitely. We give a
result to this effect which relies on the existence of solutions in
$\Sspace{p}$ which implies that the differential form
\eqref{eq:dDynkin} of Dynkin's formula may be used
(cf.~Corollary~\ref{cor:mastervalid}) such that in turn the
differential Grönwall inequality applies. We mention anew that a very
similar result has recently appeared in \cite[Theorem
  2]{ergodic_jsde}.

\begin{theorem}[\textit{Ergodicity}]
  \label{th:ergodicity}
  Under Assumption~\ref{ass:ass}~\eqref{it:1bnd}--\eqref{it:bnd},
  suppose that 
  \begin{align}
    \alpha+\beta_{2}(p-1) \le
    -\kappa_{p} < 0.
  \end{align}
  Then for integer $p \ge 1$, $E\lnorm{X_{t}}^{p}$ remains bounded as
  $t \to \infty$.
\end{theorem}

\begin{proof}
  The case $\beta_{2} > 0$ is slightly more complicated to obtain so
  we shall concentrate on this. We omit the case $p = 1$ since it
  follows from \eqref{eq:E1bound0} under the present assumptions. The
  idea of the proof is to asymptotically bound
  $E(C+\lnorm{X_{t}})^{p}$ with a certain positive constant $C = C(p)$
  to be decided upon below. By \eqref{eq:dDynkin} we get with $Z_{t} =
  C+\lnorm{X_{t}}$,
  \begin{align}
    \nonumber
    \frac{dE Z_{t}^{p}}{dt} 
    &= E\sum_{r = 1}^{R} w_{r}(X_{t}) \left[ 
      \left[ Z_{t}-\lvect \stoich_{r} \right]^{p}-
      Z_{t}^{p} \right] \\
    \label{eq:dEZ}
    = \; &E\sum_{r = 1}^{R} w_{r}(X_{t}) 
    \Bigl[ -\lvect \stoich_{r} pZ_{t}^{p-1}
      +\frac{(-\lvect \stoich_{r})^{2}}{2} p(p-1)
      (Z_{t}-\theta_{r}\lvect \stoich_{r})^{p-2} \Bigr],
  \end{align}
  by Taylor's formula for some $\theta_{r} \in [0,1]$. Using the
  assumptions we get the bound
  \begin{align}
    \nonumber
    \frac{dE Z_{t}^{p}}{dt} 
    &\le p E\, \Bigl[ (A+\alpha \lnorm{X_{t}})Z_{t}^{p-1} \\
      \label{eq:bound}
      &\phantom{\le p E\,}+
      (p-1)(B+\beta_{1}\lnorm{X_{t}}+\beta_{2}\lnorm{X_{t}}^{2})(Z_{t}+
      \|\lvect \stoich\|_{\infty})^{p-2} \Bigr].
  \end{align}
  For the first term in \eqref{eq:bound} we get from the scaled
  Young's inequality with exponent $p/(p-1)$ and conjugate exponent
  $p$ that
  \begin{align*}
    (A+\alpha \lnorm{X_{t}})Z_{t}^{p-1} &= 
    \alpha Z_{t}^{p}+(A-\alpha C)Z_{t}^{p-1} \\
    &\le \alpha Z_{t}^{p}+\frac{\epsilon (p-1)}{p} Z_{t}^{p}+
    \frac{\epsilon^{1-p}}{p}|A-\alpha C|^{p},
  \end{align*}
  for some $\epsilon > 0$. As for the second term in \eqref{eq:bound}
  we first estimate for $\beta_{2} > 0$
  \begin{align*}
    B+\beta_{1}\lnorm{X_{t}}+\beta_{2}\lnorm{X_{t}}^{2} \le
    \beta_{2} \left( \lnorm{X_{t}}+B/\beta_{1} \right)
    \left( \lnorm{X_{t}}+\beta_{1}/\beta_{2} \right).
  \end{align*}
  Next by the arithmetic-geometric mean inequality we get
  \begin{align*}
    \left( \lnorm{X_{t}}+B/\beta_{1} \right)&
    \left( \lnorm{X_{t}}+\beta_{1}/\beta_{2} \right)
    (Z_{t}+\|\lvect \stoich\|_{\infty})^{p-2} \\
    \le &\left( \frac{B}{p\beta_{1}}+
    \frac{\beta_{1}}{p\beta_{2}}+
    \frac{p-2}{p}(\|\lvect \stoich\|_{\infty}+C)+\lnorm{X_{t}} \right)^{p}
    = Z_{t}^{p},
  \end{align*}
  provided that we choose $C$ as the solution to the equation
  \begin{align}
    \label{eq:Cdef}
    \frac{B}{p\beta_{1}}+
    \frac{\beta_{1}}{p\beta_{2}}+
    \frac{p-2}{p}(\|\lvect \stoich\|_{\infty}+C) &= C.
  \end{align}
  Taken together we thus have
  \begin{align*}
    \frac{dE Z_{t}^{p}}{dt} &\le 
    p \left( -\kappa_{p}+
    \frac{\epsilon (p-1)}{p} \right) E Z_{t}^{p}+
    \epsilon^{1-p}|A-\alpha C|^{p}.
  \end{align*}
  Since $\kappa_{p} > 0$ we may pick a small enough $\epsilon$ such
  that the bracketed expression remains negative. By Grönwall's
  inequality this then proves the result with $\beta_{2} > 0$. To
  prove the case $\beta_{2} = 0$ the same idea of proof applies and
  results in
  \begin{align*}
    \frac{dE Z_{t}^{p}}{dt} &\le 
    p \left( -\kappa_{p}+
    \frac{\epsilon (p-1)}{p}+
    \frac{\epsilon(p-1)^{2}}{p}\beta_{1}
    \right) E Z_{t}^{p} \\
    &\phantom{\le \frac{p}{2}}+
    \epsilon^{1-p}\left(|A-\alpha C|^{p}+
      \beta_{1}(p-1) \right),
  \end{align*}
  for a certain new constant $C$ satisfying an equation similar to
  \eqref{eq:Cdef}.
\end{proof}


We next aim at deriving some stability estimates with respect to
perturbations in the \emph{reaction coefficients}. An early account of
this was given by Kurtz in \cite{countingProcesses}, see also
\cite{parameterCTMC} for a recent discussion in a bounded
setting. Given the linear dependence on the coefficients $k_{r}$, $r =
1 \ldots 4$ in the elementary reactions \eqref{eq:el} a suitable model
seems to be that a perturbation $k_{r} \to k_{r}+\delta_{r}$ in a
propensity $w_{r}(x)$ spreads linearly in a relative sense,
\begin{align}
  |w_{r}(x,k_{r})-w_{r}(x,k_{r}+\delta_{r})| &\le \mbox{constant} \times 
  \delta_{r} w_{r}(x,k_{r}). \\
  \intertext{We make this formal by requiring that}
  \label{eq:coeffLip}
  (\onevect \stoich^{2})|w(x)-w_{\delta}(x)| &\le \delta \|w(x)\|_{1}
  \le \delta C(1+\|x\|_{1}^{2}),
  \intertext{where $\delta$ is a suitable measure of the total
    perturbation vector and where the perturbed propensity vector
    function is given by}
  \label{eq:perturbation}
  w_{\delta}(x) \equiv [w_{1}(x,k_{1}+\delta_{1}),&
    \ldots,w_{R}(x,k_{R}+\delta_{R})]^{T}.
\end{align}
The existence of an absolute constant $C$ in \eqref{eq:coeffLip}
follows from Assumption~\ref{ass:ass}~\eqref{it:lip}. We further
conveniently assume that the entire statement of
Assumption~\ref{ass:ass} carries over to the perturbed system, and for
convenience we shall also assume that all constants are the same. By
the triangle inequality and Assumption~\ref{ass:ass}~\eqref{it:lip} we
obtain from \eqref{eq:coeffLip} the bound
\begin{align}
  \nonumber
  (\onevect \stoich^{2})|w(x)-w_{\delta}(y)| &\le
  L(1+\|x+y\|_{1})\|x-y\|+\delta C(1+\|y\|_{1}^{2}) \\
  \label{eq:coeffLip2} 
  &\le C(\delta+\|x-y\|^{2}),
\end{align}
with $C$ some constant and where the simplification in
\eqref{eq:coeffLip2} assumes an \textit{a priori} bound (e.g.~stopping
time) $\|x+y\|_{1} \le P$ and additionally requires the integer
inequality.

The starting point for the analysis will be It\^o's formula under the
coupling described in Section~\ref{subsubsec:coupling}. The techniques
used below generalize well to $p$th order moment estimates, but for
ease of exposition we let $p = 2$.

Hence under the model for coefficient perturbations
\eqref{eq:coeffLip}--\eqref{eq:perturbation} we have that
\begin{align}
    \nonumber
    \|X_{t}-Y_{t}\|^{2} &= \int_{0}^{t}
    2(X_{s}-Y_{s},F(X_{s})-F_{\delta}(Y_{s}))
    +(\onevect \stoich^{2})|w(X_{s})-w_{\delta}(Y_{s})| \, ds \\
    \nonumber
    &\phantom{=}+\int_{0}^{t} \int_{\markspace}
    2(X_{s-}-Y_{s-},-\stoich(\hatw^{\delta}(X_{s-},Y_{s-}; \,
    z)-\hatw^{\delta}(Y_{s-},X_{s-}; \, z)))+ \\
    \label{eq:2flow}
    &\phantom{+\int_{0}^{t}}
    \|-\stoich(\hatw^{\delta}(X_{s-},Y_{s-}; \, z)-
    \hatw^{\delta}(Y_{s-},X_{s-}; \, z)))\|^{2} \, (\mu_{\delta}-m_{\delta})(ds \times dz).
\end{align}

\begin{theorem}[\textit{Continuity}]
  \label{th:continuity}
  Let two trajectories $X_{t}$ and $Y_{t}$ be given, with the same
  initial data and coupled according to the discussion in
  Section~\ref{subsubsec:coupling}. Let the propensities for $Y_{t}$
  be perturbed by $\delta$ as indicated in
  \eqref{eq:coeffLip}--\eqref{eq:perturbation}. Then
  \begin{align}
    \lim_{\delta \to 0+} E \|X_{t}-Y_{t}\|^{2} = 0.
  \end{align}
\end{theorem}

\begin{proof}
  We use the stopping time $\tau_{\stopping} = \inf_{t \ge
    0}\{\|X_{t}+Y_{t}\|_{1} > \stopping\}$ and put $\hat{t} = t \wedge
  \tau_{\stopping}$. From \eqref{eq:2flow} we get
 \begin{align}
    \nonumber
    E\|X_{\hat{t}}-Y_{\hat{t}}\|^{2} = \int_{0}^{\hat{t}} E \bigr[&
    2(X_{s}-Y_{s},F(X_{s})-F_{\delta}(Y_{s}))
    +(\onevect \stoich^{2})|w(X_{s})-w_{\delta}(Y_{s})| \bigl] \, ds \\
    \label{eq:E2flow_delta}
    &\le \int_{0}^{\hat{t}} E \Bigl[
    2(M+\mu\|X_{s}+Y_{s}\|_{1})\|X_{s}-Y_{s}\|^{2}+ \\
    \nonumber
    2\delta(\onevect \stoich^{2})^{1/2} &\|w(Y_{s})-w_{\delta}(Y_{s})\|
    \|X_{s}-Y_{s}\|+
    C(\delta+\|X_{s}-Y_{s}\|^{2}) \Bigr] \, ds, \\
    \intertext{where \eqref{eq:coeffLip2} was used. Simplifying
      further for a bounded $\delta$ we get}
    \nonumber
    E\|(X-Y)(t \wedge \tau_{\stopping})\|^{2} &\le 
    \int_{0}^{t \wedge \tau_{\stopping}} C \left(
      \delta+E\|X_{s}-Y_{s}\|^{2} \right) \, ds \\
    \nonumber
    &\le \int_{0}^{t} C \left( \delta+E\|(X-Y)(s \wedge
      \tau_{\stopping})\|^{2} \right) \, ds.
  \end{align}
  Using that $X_{0} = Y_{0}$ and Grönwall's inequality we conclude
  that
  \begin{align*}
    E\|X_{\hat{t}}-Y_{\hat{t}}\|^{2} \le \delta (\exp(Ct)-1).
  \end{align*}
  To get rid of the stopping time we write in terms of indicator functions,
  \begin{align}
    \nonumber
    E\|X_{t}-Y_{t}\|^{2} &= E \left[ \|X_{t}-Y_{t}\|^{2}[t
      < \tau_{\stopping}]+\|X_{t}-Y_{t}\|^{2}[t \ge \tau_{\stopping}]
    \right] \\
    \label{eq:2est}
    &\le E \|X_{\hat{t}}-Y_{\hat{t}}\|^{2}+\left(
        E\|X_{t}-Y_{t}\|^{4} \right)^{1/2} \left( \Prob[t \ge
        \tau_{\stopping}] \right)^{1/2}. \\
    \intertext{Using $\Prob[t \ge \tau_{\stopping}] =
        \Prob[\sup_{s \in [0,t]} \|X_{s}+Y_{s}\|_{1} > \stopping]$
        we get from Markov's inequality and the previous estimate}
      \nonumber
      E\|X_{t}-Y_{t}\|^{2} &\le \delta  (\exp(Ct)-1) \\
      \label{eq:2est2}
      &+\left(
        E\|X_{t}-Y_{t}\|^{4} \right)^{1/2} P^{-1/2} \left( E \sup_{s
          \in [0,t]} \|X_{s}+Y_{s}\|_{1} \right) ^{1/2}.
  \end{align}
  Relying on the existence result in Theorem~\ref{th:exist} we find
  that for any given $\varepsilon > 0$ we can select $P$ (and hence
  also $C$) such that the right term is $< \varepsilon/2$. We can next
  find $\delta_{0} > 0$ such that for all $\delta \le \delta_{0}$,
  also the left term is $< \varepsilon/2$. Hence for all $\delta \le
  \delta_{0}$, $E\|X_{t}-Y_{t}\|^{2} < \varepsilon$ as claimed.
\end{proof}

As a by-product of the proof we see that if the process is
\emph{bounded}, then for $P$ large enough the probability in
\eqref{eq:2est} is zero.

\begin{corollary}[\textit{Perturbation estimate, bounded version}]
  If in Theorem~\ref{th:continuity}, the processes $X_{t}$ and $Y_{t}$
  are bounded, then for a constant $C > 0$,
  \begin{align}
    \label{eq:2estbnd}
    E \|X_{t}-Y_{t}\|^{2} \le \delta (\exp(Ct)-1).
 \end{align}
\end{corollary}

The constant $C$ in \eqref{eq:2estbnd} can be bounded explicitly by
inspection of \eqref{eq:coeffLip2} and \eqref{eq:E2flow_delta}.

For an \emph{unbounded} system it is apparently much more difficult to
obtain explicit estimates. However, by controlling also the martingale
part we can strengthen Theorem~\ref{th:continuity} in another
direction.

\begin{theorem}[\textit{Continuity/$\sup$}]
  \label{th:sup_continuity}
  Under the same assumptions as Theorem~\ref{th:continuity} we have that
  \begin{align}
    \lim_{\delta \to 0+} E \sup_{s \in [0,t]} \|X_{s}-Y_{s}\|^{2} = 0.
  \end{align}
\end{theorem}

\begin{proof}
  The quadratic variation of the martingale part in \eqref{eq:2flow}
  can be bounded as
  \begin{align*}
    \nonumber
    E \, [M]_{\hat{t}}^{1/2} &\le
    E \Biggl[ \int_{0}^{\hat{t}} \int_{\markspace}
      \Bigl| 2(X_{s-}-Y_{s-},-\stoich(\hatw^{\delta}(X_{s-},Y_{s-}; \,
    z)-\hatw^{\delta}(Y_{s-},X_{s-}; \, z)))+ \\
    &\phantom{\le \int_{0}^{t} \int_{\markspace}}
    \|\stoich(\hatw^{\delta}(X_{s-},Y_{s-}; \, z)-
    \hatw^{\delta}(Y_{s-},X_{s-}; \, z)))\|^{2}\Bigr| \, \mu_{\delta}(ds \times dz) \Biggr] \\
    &\le
    E \Biggl[ \int_{0}^{\hat{t}} \int_{\markspace}
      \Bigl( 2\|X_{s}-Y_{s}\|\|\stoich(\hatw^{\delta}(X_{s},Y_{s}; \,
    z)-\hatw^{\delta}(Y_{s},X_{s}; \, z))\|+ \\
    &\phantom{\le \int_{0}^{t} \int_{\markspace}}
    \|\stoich(\hatw^{\delta}(X_{s},Y_{s}; \, z)-
    \hatw^{\delta}(Y_{s},X_{s}; \, z)))\|^{2} \Bigr)
    W^{\delta}(X_{s},Y_{s}) \, ds
    \times dz \Biggr] \\
    &=
    E \Biggl[ \int_{0}^{\hat{t}} 
    \Bigl( 2\|X_{s}-Y_{s}\| (\onevect \stoich^{2})^{1/2}+
    (\onevect \stoich^{2}) \Bigr) |w(X_{s})-w_{\delta}(Y_{s})| \, ds \Biggr] \\
    &\le \int_{0}^{\hat{t}} C \left(
      \delta+E\|X_{s}-Y_{s}\|^{2} \right) \, ds,
 \end{align*}  
 after using \eqref{eq:coeffLip2} and the integer inequality anew. For
 the drift part we may use the corresponding bound developed in the
 proof of Theorem~\ref{th:continuity}. After taking supremum and
 expectation values of \eqref{eq:2flow} and using Burkholder's
 inequality we therefore arrive at
 \begin{align*}
   E \|X-Y\|^{2}(t \wedge \tau_{\stopping}) &\le \int_{0}^{\hat{t}} C (
      \delta+E \|X-Y\|^{2}(s) ) \, ds \\
      &\le \int_{0}^{t} C (
      \delta+E \|X-Y\|^{2}(s \wedge \tau_{\stopping})) \, ds \\
      &\le \delta (\exp(Ct)-1)
 \end{align*}
 by Grönwall's inequality and using the notation $\|X\|(t) \equiv
 \sup_{s \in [0,t]} \|X_{t}\|$. We now rely on the same strategy as in
 the proof of Theorem~\ref{th:continuity} to similarly arrive at
 \begin{align*}
   E\|X-Y\|^{2}(t) &\le \delta  (\exp(Ct)-1)+\left(
     E\|X-Y\|^{4}(t) \right)^{1/2} P^{-1/2} \left( E \|X+Y\|_{1}(t) \right) ^{1/2},
 \end{align*}
 and the conclusion follows as before.
\end{proof}


\section{Conclusions}
\label{sec:conclusions}

We have proposed a theoretical framework consisting of \textit{a
  priori} assumptions and estimates for problems in stochastic
chemical kinetics. The assumptions are strong enough to guarantee
well-posedness for a large and physically relevant class of
problems. Long time estimates and limit results for perturbations in
rate constants have been studied to exemplify the theory. The
assumptions are \emph{constructive} in the sense that explicit
techniques for obtaining all postulated constants have either been
worked out in detail or at least indicated. We have seen that the case
$\beta_{2} = 0$ in Assumption~\ref{ass:ass}~\eqref{it:bnd} is
particularly promising from the analysis point of view in that the
conditions for existence in Theorem~\ref{th:exist} and the ergodicity
in Theorem~\ref{th:ergodicity} both can be formulated naturally.

In the course of motivating our setup we have seen that most problems
do not admit global Lipschitz constants and that one-sided versions do
not provide a better alternative. Another conclusion worth
highlighting is that it pays off to consider jump SDEs in a fully
discrete setting in that there are potential complications in proving
uniqueness in continuous state space. A practical implication is that
care should be exercised when forming continuous approximations to
these types of jump SDEs.

For future work we intend to re-visit certain classical results from
the perspective of the framework developed herein; for example,
thermodynamic limit results, time discretization strategies, and
quasi-steady state approximations --- all of which have a practical
impact in a range of applications.


\section*{Acknowledgment}

The author likes to express his sincere gratitude to Takis
Konstantopoulos for several fruitful and clarifying discussions. Early
inputs on this work were also gratefully obtained from Henrik Hult,
Ingemar Kaj, and Per Lötstedt.

This work was supported by the Swedish Research Council within the
UPMARC Linnaeus center of Excellence.


\newcommand{\doi}[1]{\href{http://dx.doi.org/#1}{doi:#1}}
\newcommand{\available}[1]{Available at \url{#1}}
\newcommand{\availablet}[2]{Available at \href{#1}{#2}}


\providecommand{\noopsort}[1]{} \providecommand{\doi}[1]{\texttt{doi:#1}}
  \providecommand{\available}[1]{Available at \texttt{#1}}
  \providecommand{\availablet}[2]{Available at \texttt{#2}}

\end{document}